\newfont{\Bbb}{msbm10 scaled\magstephalf}
 \newtheorem{thm}{Theorem}[section]
 \newtheorem{cor}[thm]{Corollary}
 \newtheorem{lem}[thm]{Lemma}
 \newtheorem{prop}[thm]{Proposition}
 \theoremstyle{definition}
 \theoremstyle{remark}
 \numberwithin{equation}{section}
\def\d{\mathrm{d}}
\def\vp{\propto}
\begin{document}
\title[Compact intertwining relations]
 {Compact intertwining relations for composition operators on $H^\infty$ and the Bloch spaces}

\author[C. Z. Tong, Z. H. Zhou and C. Yuan]{Ce-Zhong Tong \and Ze-Hua Zhou$^*$ \and Cheng Yuan}

\address{\newline Ce-Zhong Tong\newline Department of Mathematics,
Hebei University Technology, Tianjin 300401, P.R. China.}
\email{cezhongtong@hotmail.com}

\address{\newline Ze-Hua Zhou\newline Department of Mathematics,
Tianjin University, Tianjin 300350, P.R. China.}
\email{zehuazhoumath@aliyun.com;zhzhou@tju.edu.cn}

\address{\newline Cheng Yuan\newline Institute of Mathematics, School of Science,
Tianjin University of Technology and
Education, Tianjin 300222, P.R. China}
\email{yuancheng1984@163.com}

\keywords{composition operator; Volterra operator; Bloch space;
compact intertwining relation}

\subjclass[2010]{Primary: 47B38; Secondary: 47B33, 30H10,
 47G10, 46E15, 32A36.}

\date{}
\thanks{\noindent $^*$ Corresponding author.\\
The work was supported by the National Natural Science Foundation of China
(Grant Nos. 11301132, 11301373, 11371276, 11201331, 11171087) and
Natural Science Foundation of Hebei Province (Grant No. A2013202265).}

\begin{abstract}
On the space of bounded analytic functions and the Bloch space on the unit disk, we study the compact intertwining relations
for composition operators, whose intertwining operators are Volterra type operators.
Further, we consider the compact intertwining relations, which are between the
whole collection of composition operators and some Volterra operator, and the whole
collection of bounded Volterra operators and some composition operator.

\end{abstract}

\maketitle
\section{Introduction}

If $X$ and $Y$ are two Banach spaces, the symbol $\mathscr{B}(X, Y)$ denotes the collection of all
bounded linear operators from $X$ to $Y$.
Let $\mathcal{K}(X, Y)$ be the collection of
all compact elements of $\mathscr{B}(X,Y)$, and let $\mathscr{Q}(X, Y)$ be the quotient set
$\mathscr{B}(X, Y)/\mathcal{K}(X, Y)$. 

For linear operators $A\in\mathscr{B}(X,X)$, $B\in\mathscr{B}(Y,Y)$ and $T\in\mathscr{B}(X, Y)$,
the phrase ``$T$ intertwines
$A$ and $B$
in $\mathscr{Q}(X, Y)$" (or ``$T$ intertwines $A$ and
$B$ compactly") means that
\begin{equation}
  \label{a4}TA=BT\mod{\mathcal{K}(X, Y)}\quad\mbox{with}\quad T\neq0.
\end{equation}
The notation $A\propto_K B$ $(T)$ represents the relation in equation \eqref{a4}. In fact, if $T$
is an invertible operator on $X$, then the relation $\propto_K$ is
symmetric.

Recall that the essential norm of a bounded linear operator $T$ is the distance
from $T$ to the compact operators, that is, $$\|T\|_e=\inf\{\|T-K\|: K\mbox{ is compact}\}.$$
Notice that $\|T\|_e=0$ if and only if $T$ is compact. So estimates on $\|T\|_e$ lead
to conditions for $T$ to be compact.

Let $\mathbb{D}$ be the unit disk in the complex plane. Denote by
$H(\mathbb{D})$ the class of all holomorphic functions on
$\mathbb{D}$, and $S(\mathbb{D})$ the collection of all the
holomorphic self-mappings of $\mathbb{D}$. Every $\varphi\in
S(\mathbb{D}) $ induces a composition operator $C_\varphi$ defined
by $C_\varphi f=f\circ\varphi$ for $f\in H(\mathbb{D})$.

Let $g\in H(\mathbb{D})$. The Volterra operator $J_g$ is defined by
$$J_gf(z)=\int_0^z f(\zeta)g'(\zeta) d \zeta,\,z\in\mathbb{D},\;\; f\in H(\mathbb{D});$$ and
another integral operator $I_g$ is defined by
$$I_gf(z)=\int_0^z f'(\zeta)g(\zeta) d \zeta,\,z\in\mathbb{D},\;\; f\in H(\mathbb{D}).$$

The notation $H^\infty(\mathbb{D})$ represents the algebra of bounded
holomorphic functions with $\|\cdot\|_\infty$ as its supreme
norm. For $\alpha>0,\beta\in\mathbb{R}$, the Bloch type space $\mathcal{B}_{\alpha,\log^\beta}$ consists of all $f\in
H(\mathbb{D})$ such that
$$\|f\|_{*}:=\sup\limits_{z\in\mathbb{D}}(1-|z|^2)^\alpha\log^\beta\frac{2}{1-|z|^2}|f'(z)|<\infty.$$
Then $\|\cdot\|_*$ is a complete semi-norm on $\mathcal{B}_{\alpha,\log^\beta}$, which is M\"{o}bius
invariant.
We denote the Banach space associated to $\mathcal B_{\alpha,\log^\beta}$ by $\tilde{\mathcal B}_{\alpha,\log^\beta}$,
where the norm is given by the formula
$$\|f\|_{\tilde{\mathcal{B}}_{\alpha,\log^\beta}}=|f(0)|+\|f\|_*.$$
The little Bloch space, denoted by $\mathcal{B}_{\alpha,\log^\beta;0}$, consists of
$f\in\mathcal{B}_{\alpha,\log^\beta}$ for which
$$\lim\limits_{|z|\rightarrow1}(1-|z|^2)^\alpha\log^\beta\frac{2}{1-|z|^2}|f'(z)|=0.$$
In this paper, we abbreviate $\mathcal{B}=\mathcal{B}_{1,\log^0}$ and $\mathcal{B}_{\log}=\mathcal{B}_{1,\log^1}$.

Composition operators were studied intensively in the past a few decades.
A lot of efforts have been made on characterizing bounded and compact composition operators on spaces of analytic functions,
 for example, \cite{S1} for Hardy space and \cite{MM} for Bloch spaces.
Interested readers may refer to books \cite{CM,Sha,Z2} and some recent papers \cite{ZC,ZS,ZZ} to learn
much more on this subject.

The discussion of $J_g$ first
arose in connection with semigroups of composition operators, and readers may refer to \cite{SZ} for background.
Recently, the problem of characterizing the boundedness and compactness of $J_g$ and $I_g$ on various spaces
of analytic functions has attracted considerable attention.
For example, the boundedness of $J_g$ on Hardy spaces, Bergman spaces, BMOA space, Bloch space
and $\mathcal{Q}_p$ space are characterized in \cite{AC,AS,SZ,X2,X3,X1}, respectively. The
same problems for the product of composition and Volterra operators
on kinds of function spaces on the open unit disk of the plane have
also been discussed, see some recent papers \cite{LS,LS6,LS7'}.

Based on these results, we consider the composition operator
$C_\varphi: X\rightarrow X$, and the integral-type operator $V_g\;
(=J_g\;\mbox{or}\;I_g): X\rightarrow\mathcal{B}$ where $X$ represents
$H^\infty$ or $\mathcal{B}$. 
We are interested in the compact intertwining relations
\begin{equation}
  \label{13} C_\varphi\mid_{X}\vp_K C_\varphi\mid_{\mathcal{B}}\;\;(V_g\mid_{X\rightarrow\mathcal{B}})
\end{equation}

If \eqref{13} holds for any
$\varphi\in S(\mathbb{D})$ and $g\in H(\mathbb{D})$,
we may also say that $C_\varphi$ and $V_g$
\emph{essentially commute}.
It is of special interest to us to decide when a given Volterra operator essentially commutes with every composition
operator and when a given composition operator essentially commutes with every Volterra operator.

Two main Questions in this paper are:

\begin{description}
  \item[(Q1)] what properties should a non-constant $g$ have, if
$$C_\varphi\mid_{X}\vp_K C_\varphi\mid_{\mathcal{B}}\quad(V_g\mid_{X\rightarrow\mathcal{B}})$$ holds for every $\varphi\in S(\mathbb{D})$; and
  \item[(Q2)] what properties should a $\varphi\in S(\mathbb{D})$ have, if
$$C_\varphi\mid_{X}\vp_K C_\varphi\mid_{\mathcal{B}}\quad(V_g\mid_{X\rightarrow\mathcal{B}})$$ holds for every bounded
$V_g$?
\end{description}

For simplicity, if $g$ satisfies conditions in \textbf{(Q1)}, we write $C\mid_{X}\vp_K C\mid_{\mathcal{B}}$ $(V_g)$;
if $\varphi$ satisfies conditions in \textbf{(Q2)}, we write $C_\varphi\mid_{X}\vp_K C_\varphi\mid_{\mathcal{B}}$
$(V\mid_{X\rightarrow\mathcal{B}})$.
By the way, the collections of $g$ satisfying conditions similar as \textbf{(Q1)} was called the
\emph{universal set} of $V_g$ by the authors in \cite{TZ2,TZ}.
Our use of the term ``universal set" should not be confused with the notion of ``universal set" which appears in the
dynamical theory of linear operators.

In the following discussion, we write $A\lesssim B$ if there exists an absolute constant $C$ such that $A\leq C\cdot B$,
and $A\approx B$ represents $A\lesssim B$ and $B\lesssim A$.

%
%
%

\section{Preliminaries}

Before the discussion of our main results, we need some preliminary
notation and propositions. We state them without proof.

For $\varphi\in S(\mathbb{D})$, denote
the \emph{Schwarz derivative} of $\varphi$ by
$$\varphi^\#(z):=\frac{1-|z|^2}{1-|\varphi(z)|^2}\varphi'(z).$$ From the
Schwarz Lemma we know that $|\varphi^\#(z)|\leq1$, and the equality
holds if and only if $\varphi$ is an automorphism of the unit disk.
The following lemma characterizes bounded and compact composition operators
on $\mathcal{B}$ and $H^\infty$(see \cite{MM} and \cite{CM}).

\begin{lem}
  If $\varphi\in S(\mathbb{D})$, then
  \begin{description}
    \item[(1)] Every $\varphi$ induces an bounded composition operator on $\mathcal B$ and $H^\infty$.
    \item[(2)] $C_\varphi:\mathcal{B}\rightarrow\mathcal{B}$ is compact if and only if $C_\varphi$ is bounded and
    $$\lim\limits_{|\varphi(z)|\rightarrow1}|\varphi^\#(z)|=0.$$
    \item[(3)] $C_\varphi:H^\infty\rightarrow H^\infty$ is compact if and only if $\overline{\varphi(\mathbb{D})}\subset\mathbb{D}$.
  \end{description}
\end{lem}

The following criterion for compactness follows from standard
arguments, that the proof is similar to the method of Proposition 3.11 in
\cite{CM}. Hence we omit the details.
\begin{lem}
  Suppose that $\varphi\in S(\mathbb{D})$ and $g\in
  H(\mathbb{D})$. Then $C_\varphi V_g-V_gC_\varphi$
  is compact from $X$ to $\mathcal{B}$ if and only if for any bounded sequence $\{f_k\},\,k=1,2,...$ in $X$
  which converges to zero uniformly on compact subsets of $\mathbb{D}$,
  $\|(C_\varphi V_g-V_gC_\varphi)f_k\|_*\rightarrow0$ as $k\rightarrow\infty$.
\end{lem}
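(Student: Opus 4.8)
The plan is to prove the two implications separately, writing $T := C_\varphi I_g - I_g C_\varphi : X \to \mathcal{B}$ for brevity. The whole argument rests on the familiar fact that on $\mathcal{B}$ the point and derivative evaluations are bounded functionals---indeed $|h'(z)| \le \|h\|_*/(1-|z|^2)$ and $|h(z)| \le |h(0)| + \tfrac12\log\frac{1+|z|}{1-|z|}\,\|h\|_*$---so that norm convergence in $\mathcal{B}$ forces uniform convergence on compact subsets of $\mathbb{D}$. Conversely, $T$ is continuous for the compact-open topology: if $h_k \to 0$ uniformly on compacts then $h_k' \to 0$ uniformly on compacts by the Cauchy estimates, whence $I_g h_k$, $C_\varphi h_k$ and finally $T h_k$ tend to $0$ uniformly on compacts. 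These two observations are the engine of both directions.

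For the necessity, I would assume $T$ compact, take $\{f_k\}$ bounded in $X$ with $f_k \to 0$ uniformly on compacts, and argue by contradiction. If $\|T f_k\|_* \not\to 0$, pass to a subsequence with $\|T f_{k_j}\|_* \ge \varepsilon > 0$; compactness of $T$ then furnishes a further subsequence with $T f_{k_{j_l}} \to h$ in the norm of $\mathcal{B}$, so in particular $\|h\|_* \ge \varepsilon$. But norm convergence gives $T f_{k_{j_l}} \to h$ uniformly on compacts, while the compact-open continuity noted above gives $T f_{k_{j_l}} \to 0$ uniformly on compacts; hence $h = 0$, contradicting $\|h\|_* \ge \varepsilon$. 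Thus $\|T f_k\|_* \to 0$.

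For the sufficiency, let $\{f_k\}$ be an arbitrary bounded sequence in $X$, say $\|f_k\|_X \le M$. By Montel's theorem I extract a subsequence $f_{k_j}$ converging uniformly on compacts to some $f \in H(\mathbb{D})$; the uniform bound passes to the limit (pointwise when $X=H^\infty$, and through the lower semicontinuity of $(1-|z|^2)|f'(z)|$ when $X=\mathcal{B}$), so $f \in X$ and $\{f_{k_j} - f\}$ is bounded in $X$ and tends to $0$ uniformly on compacts. Applying the hypothesis to this sequence yields $\|T(f_{k_j}-f)\|_* \to 0$. To upgrade this to convergence in the full norm $\|\cdot\| = |\cdot(0)| + \|\cdot\|_*$, I compute $T h(0) = (I_g h)(\varphi(0))$, using $(I_g C_\varphi h)(0)=0$ and $(C_\varphi I_g h)(0)=(I_g h)(\varphi(0))$; this is a compact-open continuous functional of $h$, so $T(f_{k_j}-f)(0) \to 0$. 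Therefore $\|T f_{k_j} - T f\| \to 0$, meaning every bounded sequence in $X$ has an image subsequence converging in $\mathcal{B}$, i.e. $T$ is compact.

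The main obstacle lies in the sufficiency, specifically in reconciling the seminorm hypothesis with the full Bloch norm: the criterion controls only $\|\cdot\|_*$, whereas compactness into $\mathcal{B}$ also demands control of the value at the origin, and $T h(0) = (I_g h)(\varphi(0))$ need not vanish. One must therefore verify separately that this evaluation is continuous under uniform convergence on compacts, using that $h' \to f'$ uniformly on the compact segment joining $0$ to $\varphi(0)$. The remaining care is to confirm that the Montel limit $f$ genuinely lies in $X$ and that the differences $f_{k_j}-f$ stay bounded there, which is precisely where the distinction between $X=H^\infty$ and $X=\mathcal{B}$ enters.
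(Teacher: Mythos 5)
Your proposal is correct and follows essentially the approach the paper intends: the paper omits the proof of this lemma, citing the standard method of Proposition 3.11 in \cite{CM}, and your argument (Montel's theorem plus compact-open continuity of $C_\varphi I_g-I_gC_\varphi$ for sufficiency, and the norm-versus-locally-uniform convergence clash for necessity) is precisely that method. Your extra care with the origin term $Th(0)=(I_gh)(\varphi(0))$, needed to upgrade seminorm convergence to full Bloch-norm convergence, is a genuine detail the standard citation glosses over, and you handle it correctly.
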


Recall that the notation $\mathbb{C}^N$ represents the $N$
dimensional complex Euclidean space. Denote the unit ball of
$\mathbb{C}^N$ by $B_N$. If $z,w\in B_N$, we define M\"{o}bius transform by
$$\Phi_w(z)=\frac{a-P_a(z)-s_aQ_a(z)}{1-\langle z,a\rangle},$$
where $P_a(z)=\frac{\langle z, a\rangle}{\langle a,a\rangle}a$, $Q_a(z)=z-P_a(z)$ and $s_a=\sqrt{1-|a|^2}$.
 The following Lemma will be used in Section
4, which was first presented by Berndtsson in \cite{B4}.

\begin{lem} Let $\left\{x_{i}\right\}$ be a sequence in the ball
$B_{N}$ satisfying
\begin{equation}\label{14}
\prod_{j:j\neq k}|\Phi_{x_{j}}(x_{k})|\geq d>0 \quad\mbox{for
any}\quad k.
\end{equation}
Then there exists a number $M=M(d)<\infty$ and a sequence of
functions $h_{k}\in H^{\infty}(B_{N})$ such that
\begin{equation}\label{15}
(a)\; \;h_{k}(x_{j})=\delta_{kj}; \quad (b)\;\;
\sum_{k}|h_{k}(z)|\leq M \quad \mbox{for} \quad |z|<1.
\end{equation}
(The symbol $\delta_{kj}$ is equal to $1$ if $k=j$ and $0$
otherwise.)
\end{lem}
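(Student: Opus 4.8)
The plan is to run the $\bar\partial$-technique that, in several variables, takes the place of the Blaschke product available on the disk. First I would record the elementary identity for the ball automorphisms,
$$1-|\Phi_a(z)|^2=\frac{(1-|a|^2)(1-|z|^2)}{|1-\langle z,a\rangle|^2},$$
and use it to rewrite the hypothesis \eqref{14} as a strong \emph{separation} statement about $\{x_i\}$ in the pseudohyperbolic metric. From this I would extract two consequences, both with constants depending only on $d$: that the points are uniformly separated (so that Bergman balls $B(x_k,\rho)$ of a fixed small radius are pairwise disjoint), and that the discrete measure $\sum_k(1-|x_k|^2)^{s}\,\delta_{x_k}$ (point masses at the $x_k$) is a Carleson measure for $B_N$ for a suitable exponent $s=s(N)$.

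Next I would build, for each $k$, an explicit smooth candidate $\psi_k=\chi_k\cdot\bigl(\tfrac{1-|x_k|^2}{1-\langle z,x_k\rangle}\bigr)^{s}$, where $\chi_k$ is a cutoff equal to $1$ near $x_k$ and supported in $B(x_k,\rho)$. Because the supports are disjoint and the kernel factor equals $1$ at $z=x_k$, one gets the interpolation data \emph{exactly}: $\psi_k(x_j)=\delta_{kj}$. The decay factor, together with the Carleson property from the first step, yields the key majorant $\sum_k|\psi_k(z)|\leq M_0$ with $M_0=M_0(d,N)$, which is estimate (b) at the level of the non-holomorphic candidates, while $\bar\partial\psi_k$ is supported away from the points. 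I would then correct to holomorphy by solving $\bar\partial u_k=\bar\partial\psi_k$ with the side condition $u_k(x_j)=0$ for every $j$, so that $h_k=\psi_k-u_k$ is holomorphic and still satisfies $h_k(x_j)=\delta_{kj}$. The vanishing at all $x_j$ is forced by solving with respect to a plurisubharmonic weight $\varphi$ carrying logarithmic poles at every point of the sequence, so that $e^{-\varphi}$ fails to be integrable exactly at the $x_j$; Hörmander's weighted $L^2$ estimate produces $u_k$ with $\int|u_k|^2e^{-\varphi}<\infty$, which both enforces $u_k(x_j)=0$ and, through the separation-controlled weight, gives a quantitative bound. Passing from this $L^2$ bound to the $L^\infty$ bound needed for $h_k\in H^\infty(B_N)$, and simultaneously to a summed estimate $\sum_k|u_k(z)|\leq M_1$, would come from a sub-mean-value inequality over Bergman balls combined once more with the Carleson property, giving (b) with $M=M_0+M_1$.

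The main obstacle, and the real content of Berndtsson's argument, is the \emph{simultaneous} control of all the corrections. Each individual $\bar\partial$-solution is routine, but the $\ell^1$-summability demanded by (b) requires the single weight $\varphi$ to encode the separation of the \emph{entire} sequence at once, so that the $L^2$ norms of the $u_k$ decay geometrically in the Bergman distance from $x_k$ and can be added. Ensuring that the final constant $M=M(d)$ depends only on $d$ (and $N$), with no dependence on the particular sequence, is precisely where the strong product hypothesis \eqref{14} is genuinely used; this uniformity, rather than the mere solvability of the interpolation problem, is the delicate step I would expect to spend the most effort on.
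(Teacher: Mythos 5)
There is no in-paper proof to compare your attempt against: the statement is Lemma~2.2 of the paper, which is Berndtsson's interpolation theorem quoted from \cite{B4}, and the paper says explicitly at the start of Section~2 that its preliminary results are stated without proof. So the only meaningful comparison is with Berndtsson's original argument, and your outline does track its architecture faithfully: smooth candidates built from kernel factors and cutoffs on disjoint pseudohyperbolic balls, a $\bar\partial$-correction against a weight singular at the interpolation points, H\"ormander's weighted $L^2$ estimate to enforce vanishing, and sub-mean-value inequalities to pass from $L^2$ to sup-norm bounds. In that sense you chose the right road.

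As a proof, however, the proposal has genuine gaps, and they sit exactly at the quantitative steps that constitute the theorem. First, the passage from \eqref{14} to the Carleson property of $\sum_k(1-|x_k|^2)^s\delta_{x_k}$ is asserted, not proved; separation alone (a bounded number of points per Koranyi box at each scale) does not suffice, because the contributions of the scales inside a fixed box do not sum by themselves --- one must exploit the full summed condition $\sum_{j\neq k}\bigl(1-|\Phi_{x_j}(x_k)|^2\bigr)\leq 2\log(1/d)$, obtained by taking logarithms in \eqref{14} and using $\log(1/t)\geq(1-t^2)/2$. Second, in $B_N$ with $N>1$ a ``logarithmic pole'' must carry the coefficient $2N$, i.e.\ $\varphi(z)\leq 2N\log|z-x_j|+O(1)$ near each $x_j$, for $e^{-\varphi}$ to be non-integrable there; and you then need the single weight $\varphi$, with poles of this order at \emph{every} point of the sequence, to remain bounded below away from the poles by a constant depending only on $d$ --- that uniform lower bound is again precisely condition \eqref{14} combined with separation, and it needs an argument. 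Third, and most seriously, H\"ormander's estimate returns for each $k$ only a global bound on $\int|u_k|^2e^{-\varphi}$; nothing in your construction makes $u_k$ small far from $x_k$, and without geometric decay in the Bergman distance the series $\sum_k|u_k(z)|$ cannot be controlled at all. You correctly identify this simultaneous control as the crux, but identifying it is not resolving it: Berndtsson resolves it by building the decay into the construction for each $k$, and that is the actual content of the theorem. As it stands, your text is a correct and well-informed plan of attack rather than a proof.
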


The next lemma was proved by Carl Toews in \cite{T}.

\begin{lem} Let $\{z_{n}\}\subset B_{N}$ be a sequence with
$|z_{n}|\rightarrow 1$ as $n\rightarrow \infty$. Then for any given
$d\in(0,1)$ there is a subsequence such that
$\left\{x_{i}\right\}:=\left\{z_{n_{i}}\right\}$ satisfies
\eqref{14}.
\end{lem}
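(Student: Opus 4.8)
The plan is to convert the product in \eqref{14} into a sum by taking logarithms, and then to control that sum by combining the explicit formula for the M\"obius automorphism of $B_N$ with a rapidly thinned subsequence. Throughout write $\Phi_x$ for the involutive automorphism of $B_N$ interchanging $0$ and $x$, and recall the standard identity
$$1-|\Phi_x(y)|^2=\frac{(1-|x|^2)(1-|y|^2)}{|1-\langle y,x\rangle|^2}.$$
Setting $\epsilon_{jk}:=1-|\Phi_{x_j}(x_k)|^2$, the requirement $\prod_{j:j\neq k}|\Phi_{x_j}(x_k)|\geq d$ is equivalent to $\sum_{j\neq k}\bigl(-\tfrac12\log(1-\epsilon_{jk})\bigr)\leq\log(1/d)$. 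Since $-\tfrac12\log(1-t)\leq t$ for $0\leq t\leq\tfrac12$, it will suffice to produce a subsequence along which $\sum_{j\neq k}\epsilon_{jk}$ is uniformly (in $k$) small and each $\epsilon_{jk}\leq\tfrac12$.

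To build the subsequence, fix a parameter $\lambda\in(0,1)$ to be chosen last, and write $a_n:=1-|z_n|^2$. Because $|z_n|\to1$ we have $a_n\to0$, so I would extract inductively indices $n_1<n_2<\cdots$ for which $x_i:=z_{n_i}$ satisfies $a_{i+1}\leq\lambda\,a_i$; in particular the moduli $|x_i|$ increase to $1$. Using Cauchy--Schwarz in the form $|1-\langle x_j,x_k\rangle|\geq 1-|x_j||x_k|$ together with $1-|x_\ell|\geq\tfrac12(1-|x_\ell|^2)$, I would estimate the denominator from below by $\tfrac14 a_j^2$ when $j<k$ (keeping the smaller-index modulus) and by $\tfrac14 a_k^2$ when $j>k$. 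This gives
$$\epsilon_{jk}\leq\frac{4a_k}{a_j}\leq 4\lambda^{k-j}\quad(j<k),\qquad \epsilon_{jk}\leq\frac{4a_j}{a_k}\leq 4\lambda^{j-k}\quad(j>k),$$
where the geometric bounds come from iterating $a_{i+1}\leq\lambda a_i$. Summing the two geometric tails yields $\sum_{j\neq k}\epsilon_{jk}\leq 8\lambda/(1-\lambda)$ for every $k$.

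Finally I would choose $\lambda$ so small that $8\lambda/(1-\lambda)\leq\log(1/d)$, which is possible since the left side tends to $0$ as $\lambda\to0^+$. Then every $\epsilon_{jk}\leq\tfrac12$, the logarithmic inequality applies term by term, and the uniform estimate forces $\prod_{j\neq k}|\Phi_{x_j}(x_k)|\geq\exp\bigl(-8\lambda/(1-\lambda)\bigr)\geq d$ for all $k$, so $\{x_i\}$ satisfies \eqref{14}. The one point genuinely requiring care is the denominator estimate: one must split the sum at $j=k$ and use the modulus of the \emph{lower}-index point for $j<k$ and of the \emph{lower}-index point again for $j>k$, so that both ratios $a_k/a_j$ and $a_j/a_k$ are at most $1$ and decay geometrically, giving control uniform in $k$ rather than just a one-sided bound. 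Everything else reduces to the routine inequalities above.
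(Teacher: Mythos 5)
The paper itself contains no proof of this lemma: Section 2 explicitly states its preliminary results without proof, and this one is quoted from Carl Toews's paper \cite{T}. So there is no in-paper argument to compare against, and your proposal must stand on its own. It essentially does. The route you take is the natural one (and in the same spirit as Toews's original thinning argument): the identity $1-|\Phi_x(y)|^2=\frac{(1-|x|^2)(1-|y|^2)}{|1-\langle y,x\rangle|^2}$, conversion of the product bound into the sum bound $\sum_{j\neq k}\left(-\tfrac12\log(1-\epsilon_{jk})\right)\leq\log(1/d)$, a geometrically thinned subsequence with $1-|x_{i+1}|^2\leq\lambda\,(1-|x_i|^2)$, the denominator estimate $|1-\langle x_j,x_k\rangle|\geq 1-|x_j||x_k|\geq\tfrac12\max(a_j,a_k)$ (your point about always keeping the lower-index, i.e.\ larger, quantity $a_{\min(j,k)}$ is exactly what makes the bound uniform in $k$), and summation of two geometric tails to get $\sum_{j\neq k}\epsilon_{jk}\leq 8\lambda/(1-\lambda)$. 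All of these steps check out. One quantifier slip needs repair: choosing $\lambda$ only so that $8\lambda/(1-\lambda)\leq\log(1/d)$ does \emph{not} by itself guarantee $\epsilon_{jk}\leq\tfrac12$. When $d$ is small (say $d<1/e$, so $\log(1/d)>1$), that constraint permits $\lambda>1/8$, in which case your bound $\epsilon_{jk}\leq 4\lambda$ exceeds $\tfrac12$ and the term-by-term inequality $-\tfrac12\log(1-t)\leq t$ (valid only for $0\leq t\leq\tfrac12$) is not available. The fix is trivial: impose additionally $\lambda\leq 1/8$, or observe that it suffices to prove the lemma for $d$ close to $1$ since the conclusion for larger $d$ implies it for smaller $d$. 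With that one-line amendment your proof is complete and self-contained, which is arguably a useful addition given that the paper only cites the result.
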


From this lemma, there is always a subsequence which
satisfies \eqref{14} for every sequence converging to the boundary
of $B_N$, and Lemma 2.2 holds for this subsequence.
We just need the result in one dimension.

To get some simple consequences of our main problems, we will
consider the situation in the little Bloch setting. The next lemma is well known, see \cite{OSZ}.
\begin{lem}
  A closed set $K$ in $\mathcal{B}_0$ is compact if and only if it is bounded and
  satisfies $$\lim\limits_{|z|\rightarrow1}\sup\limits_{f\in K}(1-|z|^2)|f'(z)|=0.$$
\end{lem}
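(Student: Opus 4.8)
The plan is to prove the two implications separately, treating the sufficiency as the substantive part. Throughout I exploit the elementary growth estimate that for $f\in\mathcal{B}$ one has $|f(z)|\le|f(0)|+\frac12\|f\|_*\log\frac{1+|z|}{1-|z|}$, obtained by integrating $f'$ along the radius from $0$ to $z$. This shows that a norm-bounded subset of $\mathcal{B}$ is uniformly bounded on every compact subset of $\mathbb{D}$, and hence, by Montel's theorem, is a normal family; this is the tool that converts the hypothesis on $K$ into a compactness statement.

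For the necessity I would first note that a compact subset of a normed space is automatically bounded. For the decay condition I argue by contradiction: if it fails, there are $\varepsilon>0$, functions $f_n\in K$, and points $z_n$ with $|z_n|\to1$ and $(1-|z_n|^2)|f_n'(z_n)|\ge\varepsilon$. By compactness some subsequence $f_{n_k}$ converges in the Bloch norm to a limit $f\in K\subset\mathcal{B}_0$. Since $\|f_{n_k}-f\|\to0$ controls $(1-|z|^2)|f_{n_k}'(z)-f'(z)|$ uniformly in $z$, and since $(1-|z|^2)|f'(z)|\to0$ as $|z|\to1$ because $f\in\mathcal{B}_0$, the quantity $(1-|z_{n_k}|^2)|f_{n_k}'(z_{n_k})|$ must tend to $0$, contradicting the lower bound.

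For the sufficiency, let $\{f_n\}\subset K$ be arbitrary; since $K$ is closed it suffices to extract a subsequence converging in $\mathcal{B}$ to an element of $K$. By the normality observed above there is a subsequence, still written $\{f_n\}$, converging uniformly on compact subsets to a holomorphic $f$, and then $f_n'\to f'$ uniformly on compact subsets as well. The hypothesis provides, for each $\varepsilon>0$, a radius $r<1$ such that $\sup_{h\in K}(1-|z|^2)|h'(z)|<\varepsilon$ whenever $r<|z|<1$; letting $n\to\infty$ shows $(1-|z|^2)|f'(z)|\le\varepsilon$ on the same annulus, so $f\in\mathcal{B}_0$.

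The main obstacle, and the final step, is to upgrade local uniform convergence to convergence in the Bloch seminorm. Here I split the disk at the radius $r$ chosen above. On the collar $r<|z|<1$ the integrand $(1-|z|^2)|f_n'(z)-f'(z)|$ is dominated by the two separate tails $(1-|z|^2)|f_n'(z)|$ and $(1-|z|^2)|f'(z)|$, each $<\varepsilon$, so this contribution is $<2\varepsilon$. On the compact disk $|z|\le r$ the factor $1-|z|^2$ is bounded while $f_n'\to f'$ uniformly, so this contribution is $<\varepsilon$ for $n$ large. Taking the supremum over the whole disk gives $\|f_n-f\|_*<2\varepsilon$ for large $n$, and together with $f_n(0)\to f(0)$ this yields $\|f_n-f\|\to0$. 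As $K$ is closed, $f\in K$, completing the proof.
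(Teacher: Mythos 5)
Your proof is correct and complete. Note, however, that the paper itself offers no proof of this lemma at all: it is stated with the remark ``as usual'' and a citation to the Ohno--Stroethoff--Zhao reference \cite{OSZ}, where the same statement appears (as their Lemma 1) with essentially the argument you gave. Your write-up is the standard one that the citation points to --- boundedness plus the growth estimate $|f(z)|\le |f(0)|+\tfrac12\|f\|_*\log\frac{1+|z|}{1-|z|}$ gives normality via Montel, the uniform tail hypothesis passes to the locally uniform limit to put it in $\mathcal{B}_0$, and the split of $\mathbb{D}$ into a compact disk and a collar upgrades locally uniform convergence of the derivatives to convergence in the Bloch seminorm --- so it correctly fills in the proof the paper leaves implicit, including the easy contradiction argument for necessity.
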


In our discussion, we will use the boundedness of operators $J_g$ and $I_g$, from $H^\infty$ or $\mathcal B$ to $\mathcal{B}$.
Several characterizations are listed below.
\begin{lem}\label{l2.6}
  Suppose that $g\in H(\mathbb{D})$.
  Then
  \begin{description}
    \item[(Corollary 4 in \cite{LS})]  $J_g: H^\infty\rightarrow\mathcal{B}$
  is bounded if and only if $g\in\mathcal{B}$;
    \item[(Theorem 3 in \cite{LS7'})]  $J_g: \mathcal{B}\rightarrow\mathcal{B}$
  is bounded if and only if $g\in\mathcal{B}_{\log}$;
    \item[(Corollary 1 in \cite{LS} and Theorem 14 in \cite{LS7'})]  $I_g: H^\infty\mbox{ or }\mathcal B \rightarrow\mathcal{B}$
  is bounded if and only if $g\in H^\infty$.
  \end{description}
\end{lem}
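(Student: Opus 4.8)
The plan is to reduce all three equivalences to growth estimates on the Bloch seminorm by way of the two basic identities $(J_gf)'(z)=f(z)g'(z)$ and $(I_gf)'(z)=f'(z)g(z)$, obtained by differentiating under the integral sign. With these recorded, $\|J_gf\|_*=\sup_{z}(1-|z|^2)|f(z)||g'(z)|$ and $\|I_gf\|_*=\sup_{z}(1-|z|^2)|f'(z)||g(z)|$, so each assertion becomes a question of when a weight can be controlled uniformly over the relevant unit ball of test functions.

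For the sufficiency directions I would invoke the standard growth bounds of the function classes involved. For $J_g\colon H^\infty\to\mathcal{B}$, if $g\in\mathcal{B}$ then $\|J_gf\|_*\le\|f\|_\infty\|g\|_*$ immediately. For $I_g\colon X\to\mathcal{B}$ I would use that every $f\in H^\infty$ or $f\in\mathcal{B}$ satisfies $(1-|z|^2)|f'(z)|\le\|f\|_*$, together with $\|f\|_*\le\|f\|_\infty$ in the $H^\infty$ case (a consequence of Schwarz--Pick applied to $f/\|f\|_\infty$), whence $\|I_gf\|_*\le\|g\|_\infty\|f\|_*$. For $J_g\colon\mathcal{B}\to\mathcal{B}$ the key input is the logarithmic growth estimate $|f(z)|\lesssim\|f\|\log\frac{2}{1-|z|^2}$ valid for Bloch functions, which yields $\|J_gf\|_*\lesssim\|f\|\sup_{z}(1-|z|^2)|g'(z)|\log\frac{2}{1-|z|^2}$.

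For the necessity directions I would feed in suitable test families. For $J_g\colon H^\infty\to\mathcal{B}$ the constant function $f\equiv 1$ already gives $\|J_g 1\|_*=\|g\|_*$, forcing $g\in\mathcal{B}$. For $I_g\colon X\to\mathcal{B}$ I would use the automorphisms $\varphi_a(z)=\frac{a-z}{1-\bar a z}$, which satisfy $\|\varphi_a\|_\infty=1$ and $\|\varphi_a\|_*=1$ (hence are uniformly bounded in both $H^\infty$ and $\mathcal{B}$) together with $(1-|a|^2)|\varphi_a'(a)|=1$; evaluating $\|I_g\varphi_a\|_*$ at $z=a$ forces $|g(a)|\le\|I_g\|$ for every $a$, so $g\in H^\infty$. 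The delicate case is $J_g\colon\mathcal{B}\to\mathcal{B}$, where I would test against $f_a(z)=\log\frac{2}{1-\bar a z}$: the bound $(1-|z|^2)/|1-\bar a z|\le 2$ keeps $\|f_a\|_*\le 2$ so that $\|f_a\|$ is uniformly bounded, while $f_a(a)=\log\frac{2}{1-|a|^2}$, whence $\|J_gf_a\|_*\ge(1-|a|^2)|g'(a)|\log\frac{2}{1-|a|^2}$, and taking the supremum over $a$ recovers the stated condition.

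The main obstacle is the $J_g\colon\mathcal{B}\to\mathcal{B}$ equivalence. Its sufficiency rests on the sharp logarithmic growth rate of Bloch functions, and its necessity on producing a family that is bounded in the Bloch norm yet saturates that growth at a prescribed point; verifying simultaneously that $f_a$ stays uniformly Bloch-bounded and that it attains the logarithmic size is the genuinely careful step. The remaining two parts are comparatively routine once the derivative identities and the elementary test functions are in place.
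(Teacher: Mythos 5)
Your proposal is correct, but note that the paper itself contains no proof of this lemma: it is Lemma 2.5 in the preliminaries, which the authors explicitly ``state without proving,'' quoting the three equivalences from Corollary 4 and Corollary 1 of the Li--Stevi\'c paper \cite{LS} and Theorems 3 and 14 of \cite{LS7'}. What you have written is a self-contained reconstruction along the standard lines used in those references: the identities $(J_gf)'=fg'$ and $(I_gf)'=f'g$ reduce everything to weighted sup-estimates; sufficiency follows from $\|f\|_*\le\|f\|_\infty$ (Schwarz--Pick), from $g\in\mathcal{B}$ or $g\in H^\infty$, and from the logarithmic growth bound $|f(z)|\lesssim\|f\|\log\frac{2}{1-|z|^2}$ for Bloch functions; necessity follows from the test families $f\equiv 1$, the automorphisms $\varphi_a$ (for which $(1-|a|^2)|\varphi_a'(a)|=1$), and $f_a(z)=\log\frac{2}{1-\bar az}$ (uniformly Bloch since $(1-|z|^2)/|1-\bar az|\le 2$, with $f_a(a)=\log\frac{2}{1-|a|^2}$). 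All of these steps check out; the only points you leave tacit are harmless: $J_gf(0)=I_gf(0)=0$, so controlling the seminorm $\|\cdot\|_*$ is indeed equivalent to controlling the full Bloch norm of the image, and $\mathrm{Re}(1-\bar az)>0$ guarantees the principal branch of the logarithm is well defined. What your route buys, compared with the paper's citation, is a short elementary argument specialized to $\varphi=\mathrm{id}$; the cited references establish more general results about products $C_\varphi J_g$ and $J_gC_\varphi$ between Bloch-type spaces, of which the lemma's statements are special cases.
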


Some definitions and results in Geometric Function Theory are needed, and interested readers can refer to \cite{G1} and \cite{K}.
For $\zeta\in\partial\mathbb{D}$ and $M>1$ the \emph{nontangential approaching region} at $\zeta$ is defined by
$$\Gamma(\zeta,M)=\{z\in\mathbb{D}:|z-\zeta|<M(1-|z|^2)\}.$$ A function $f$ is said to have a \emph{nontangential limit}
at $\zeta$ if $\lim_{z\rightarrow\zeta}f(z)$ exists in each nontangential region $\Gamma(\zeta,M)$, and we denote
it by $\angle-\lim_{z\rightarrow\zeta}f(z)$.
If $\varphi\in S(\mathbb{D})$ and $\zeta\in\partial\mathbb{D}$, we will call $\zeta$ a \emph{boundary fixed point} of $\varphi$
if $$\angle-\lim_{z\rightarrow \zeta}\varphi(z)=\zeta.$$
We say $\varphi$ has a \emph{finite angular derivative} at $\zeta\in\partial\mathbb{D}$ if there is $\eta\in\partial\mathbb{D}$
so that$(\varphi(z)-\eta)/(z-\zeta)$ has finite nontangential limit as $z\rightarrow\zeta$. When it exists as a finite complex
number, this limit is denoted $\varphi'(\zeta)$. A $\varphi\in S(\mathbb{D})$ is said to be \emph{parabolic type}
if $\varphi$ has a boundary fixed point $\zeta$ with $\varphi'(\zeta)=1$. If $\varphi$ is parabolic type,
$\varphi(z)\rightarrow\zeta$ and $\varphi'(z)\rightarrow1$ as $z\rightarrow\zeta$
unrestricted in the unit disk, we say $\zeta$ is a $C^1$ parabolic boundary fixed point of $\varphi$.

\section{The case of intertwining operator $I_g$}

First we consider $C_\varphi I_g-I_g C_\varphi$ as an operator
from the Bloch space to itself.
\begin{thm}\label{t3.1}
  Suppose that $\varphi\in S(\mathbb{D})$ and $g\in
  H(\mathbb{D})$. Then $C_\varphi I_g-I_g C_\varphi$ is bounded on the Bloch space
if and only if
 \begin{equation}\label{1}
 \sup\limits_{z\in\mathbb{D}}|\varphi^{\#}(z)|\cdot|g(\varphi(z))-g(z)|<\infty.
 \end{equation}
\end{thm}

\begin{proof}
  Suppose \eqref{1} holds, we will show that $C_\varphi I_g-I_g C_\varphi$ is bounded on $\mathcal{B}$.
  For any $f\in\mathcal{B}$,
\begin{eqnarray*}
   ((C_\varphi I_g-I_g C_\varphi)f)(z)
    &=&C_\varphi\int_0^zf'(\zeta)g(\zeta)d\zeta-
    I_gf(\varphi(z))\\
    &=&\int_0^{\varphi(z)}f'(\zeta)g(\zeta)d\zeta-\int_0^z(f\circ\varphi)'(\zeta)g(\zeta)d\zeta
    .
   \end{eqnarray*}
   \begin{eqnarray*}
     &&\|(C_\varphi I_g-I_g C_\varphi)f\|_*\\
     &=&\sup\limits_{z\in\mathbb{D}}(1-|z|^2)\left|\varphi'(z)f'(\varphi(z))g(\varphi(z))-
     \varphi'(z)f'(\varphi(z))g(z)\right|\\
     &=&\sup\limits_{z\in\mathbb{D}}(1-|z|^2)\left|\varphi'(z)f'(\varphi(z))(g(\varphi(z))
     -g(z))\right|\\
     &=&\sup\limits_{z\in\mathbb{D}}|\varphi^\#(z)|\left|g(\varphi(z))-g(z)\right|
     (1-|\varphi(z)|^2)
     \left|f'(\varphi(z))\right|\\
     &\leq&\sup\limits_{z\in\mathbb{D}}|\varphi^\#(z)|\left|g(\varphi(z))-g(z)\right|\cdot
     \|f\|_*.
   \end{eqnarray*}
  From which we obtain that $C_\varphi I_g-I_g C_\varphi$ is bounded by \eqref{1}.

   Now suppose that $C_\varphi I_g-I_g C_\varphi$ is bounded on $\mathcal{B}$,
   then there is a constant $C$ such that
   $$\|(C_\varphi I_g-I_gC_\varphi)f\|_*\leq \|(C_\varphi I_g-I_gC_\varphi)f\|_{\mathcal{B}}<C$$
 for $\|f\|_{\mathcal{B}}\leq 1$. We will prove condition
   \eqref{1}. Suppose not, there exists a sequence $\{w_n\}$ in $\mathbb{D}$ such that
   $$\lim\limits_{n\rightarrow\infty}|\varphi^\#(w_n)||g(\varphi(w_n))-g(w_n)|=\infty.$$
   Let $\alpha_n(z)=\frac{\varphi(w_n)-z}{1-\overline{\varphi(w_n)}z}$, and $\alpha_n'(z)
   =-\frac{1-|\varphi(w_n)|^2}{(1-\overline{\varphi(w_n)}z)^2}$ for $n=1, 2, ...$. It is easy
   to check that $\|\alpha_n\|_*=1$.
   \begin{eqnarray*}
     &&\|(C_\varphi I_g-I_gC_\varphi)\alpha_n\|_*\\
     &=&\sup\limits_{z\in\mathbb{D}}|\varphi^\#(z)||g(\varphi(z))-g(z)|\cdot
     (1-|\varphi(z)|^2)|\alpha_n'(\varphi(z))|\\
     &\geq&(1-|\varphi(w_n)|^2)\frac{1-|\varphi(w_n)|^2}{|(1-\overline{\varphi(w_n)}\varphi(w_n))^2|}
     \cdot|\varphi^\#(w_n)||g(\varphi(w_n))-g(w_n)| \\
     &=& |\varphi^\#(w_n)||g(\varphi(w_n))-g(w_n)|\rightarrow\infty.
   \end{eqnarray*}
   That is impossible. So \eqref{1} holds.
\end{proof}

When we investigate essential commutativity of $C_\varphi$ and $I_g$,
we need add the condition $g\in H^\infty$ to ensure the boundedness
of $I_g$ on the Bloch space, see Lemma 2.5.
\begin{thm}\label{t3.2}
  Suppose that $\varphi\in S(\mathbb{D})$ and $g\in H^\infty(\mathbb{D})$.
  Then $C_\varphi$ and $I_g$ are essentially commutative on $\mathcal{B}$ if and only if
  \begin{equation}\label{2}
    \lim\limits_{|\varphi(z)|\rightarrow1}|\varphi^\#(z)||g(\varphi(z))-g(z)|=0.
  \end{equation}
\end{thm}
\begin{proof} \emph{Sufficiency}. Note that $g\in H^\infty$ and $\|\varphi^\#\|_\infty\leq1$, we have
  $$\sup\limits_{z\in\mathbb{D}}|\varphi^{\#}(z)|\cdot|g(\varphi(z))-g(z)|<\infty.$$
 From which we conclude that $C_\varphi I_g-I_gC_\varphi$ is a bounded operator by Theorem \ref{t3.1}.
  For any bounded sequence $\{f_k\}$ in $\mathcal{B}$ converging to zero uniformly
  on compact subsets of $\mathbb{D}$ with $\|f_k\|_{\mathcal{B}}\leq M$. From \eqref{2}, it follows that for any small $\varepsilon>0$,
  there exists a $\delta>0$ such that
$$|\varphi^\#(z)||g(\varphi(z))-g(z)|<\frac{\varepsilon}{M}$$ with $\varphi(z)\in\mathbb{D}\setminus(1-\delta)\mathbb{D}$.
\begin{eqnarray}
    &&\|(C_\varphi I_g-I_gC_\varphi)f_k\|_*\nonumber\\
    &=&\sup\limits_{z\in\mathbb{D}}|\varphi^\#(z)||g(\varphi(z))-g(z)|
    (1-|\varphi(z)|^2)|f_k'(\varphi(z))|\nonumber\\
   \label{3} &\leq &(A) +(B),
    \end{eqnarray}
where
$$(A)=\sup\limits_{\varphi(z)\in(1-\delta)\mathbb{D}}|\varphi^\#(z)||g(\varphi(z))-g(z)|
    (1-|\varphi(z)|^2)|f_k'(\varphi(z))|,$$
    and $$(B)=\sup\limits_{\varphi(z)\in\mathbb{D}\setminus
    (1-\delta)\mathbb{D}}|\varphi^\#(z)||g(\varphi(z))-g(z)|
    (1-|\varphi(z)|^2)|f_k'(\varphi(z))|.$$
   It is obvious that $(A)<\varepsilon$ for sufficient large $k$ , and
    $$(B)<\frac{\varepsilon}{M}\cdot\sup\limits_{z\in\mathbb{D}}
    (1-|\varphi(z)|^2)|f_k'(\varphi(z))|=\frac{\varepsilon}{M}\|f_k\|_*<\varepsilon.$$
Thus $C_\varphi I_g-I_gC_\varphi$ is compact on $\mathcal{B}$.

 \emph{Necessity}. If $C_\varphi I_g-I_gC_\varphi$ is compact on $\mathcal{B}$,
  certainly $C_\varphi I_g-I_gC_\varphi$ is a bounded operator. Therefore
  $$\sup\limits_{z\in\mathbb{D}}|\varphi^{\#}(z)|\cdot|g(\varphi(z))-g(z)|<\infty$$
  by Theorem \ref{t3.1}.
  Suppose the
  $$\sup\limits_{|\varphi(z)|\to1}|\varphi^\#(z)||g(\varphi(z))-g(z)|\neq0,$$ then there is a sequence $\{w_n\}$ in $\mathbb D$ with
  $|\varphi(w_n)|\to1$ and an $\epsilon>0$ such that
  \begin{equation}\label{a}|\varphi^\#(w_n)||g(\varphi(w_n))-g(w_n)|>\epsilon.\end{equation}
  Let \begin{equation}\label{4}h_n(z)=\frac{1-|\varphi(w_n)|^2}{1-\overline{\varphi(w_n)}z}.\end{equation}
  A simple computation shows that
  $$h_n'(z)=(1-|\varphi(w_n)|^2)\frac{\overline{\varphi(w_n)}}{(1-\overline{\varphi(w_n)}z)^2}.$$
  So $\|h_n\|_*\leq1$ and $\{h_n\}$ converges to zero uniformly on compact subsets
  of $\mathbb{D}$. By Lemma 2.2, it follows that $\|(C_\varphi
  I_g-I_gC_\varphi)h_n\|_*\rightarrow0$. On the other hand, by
  \eqref{a}, we have
\begin{eqnarray*}
    &&\|(C_\varphi I_g-I_gC_\varphi)h_n\|_*\\
    &=&\sup\limits_{z\in\mathbb{D}}|\varphi^\#(z)|
    |g(\varphi(z))-g(z)|\cdot(1-|\varphi(z)|^2)|h_n'(\varphi(z))|\\
    &=&\sup\limits_{z\in\mathbb{D}}|\varphi^\#(z)|
    |g(\varphi(z))-g(z)|\cdot(1-|\varphi(z)|^2)
    \frac{|\varphi(w_n)|(1-|\varphi(w_n)|^2)}
    {|1-\overline{\varphi(w_n)}\varphi(z)|^2}\\
    &\geq&|\varphi^\#(w_n)|
    |g(\varphi(w_n))-g(w_n)|
    |\varphi(w_n)|>\epsilon
  \end{eqnarray*}
  when $n\rightarrow\infty$. And we find a contradiction. So
  \eqref{2} holds when $C_\varphi$ and $I_g$ essentially commute.
\end{proof}

\begin{cor}
    Let $\varphi\in S(\mathbb{D})$ and
    $g\in H(\mathbb{D})$, then $C_\varphi I_g-I_g C_\varphi: H^\infty
    \rightarrow\mathcal{B}$ is bounded if and only if \eqref{1}
    holds; $C_\varphi I_g-I_g C_\varphi: H^\infty
    \rightarrow\mathcal{B}$ is compact if and only if \eqref{2}
    holds.
\end{cor}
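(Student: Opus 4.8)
The plan is to follow the proofs of Theorems 3.1 and 3.2 almost verbatim, the only new ingredient being the continuous inclusion $H^\infty \hookrightarrow \mathcal{B}$. Indeed, by the Schwarz--Pick inequality every $f \in H^\infty$ satisfies $(1-|w|^2)|f'(w)| \le \|f\|_\infty$, so $\|f\|_* \le \|f\|_\infty$; and the identity
$$\|(C_\varphi I_g - I_g C_\varphi)f\|_* = \sup_{z\in\mathbb{D}} |\varphi^\#(z)|\,|g(\varphi(z))-g(z)|\,(1-|\varphi(z)|^2)|f'(\varphi(z))|$$
obtained in the proof of Theorem 3.1 is valid for every $f\in H(\mathbb{D})$, in particular for $f \in H^\infty$. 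Thus the two characterizations will come down to re-reading the estimates of Theorems 3.1 and 3.2 with $\|f\|_*$ replaced by the stronger $\|f\|_\infty$ on the domain side.

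For the boundedness statement, sufficiency is immediate: assuming \eqref{1}, the displayed identity together with $(1-|\varphi(z)|^2)|f'(\varphi(z))| \le \|f\|_* \le \|f\|_\infty$ yields $\|(C_\varphi I_g - I_g C_\varphi)f\|_* \le \big(\sup_{z}|\varphi^\#(z)||g(\varphi(z))-g(z)|\big)\|f\|_\infty$. For necessity I would reuse the test functions $\alpha_n(z) = \frac{\varphi(w_n)-z}{1-\overline{\varphi(w_n)}z}$ of Theorem 3.1; the key point is that each $\alpha_n$ is an automorphism of $\mathbb{D}$, hence lies in $H^\infty$ with $\|\alpha_n\|_\infty = 1$, so it is an admissible unit test vector here. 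The same computation as in Theorem 3.1 then forces $|\varphi^\#(w_n)||g(\varphi(w_n))-g(w_n)|$ to remain bounded, giving \eqref{1}.

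For the compactness statement I would invoke Lemma 2.1 with $X = H^\infty$. Sufficiency repeats the argument of Theorem 3.2: given a bounded sequence $\{f_k\}$ in $H^\infty$ with $\|f_k\|_\infty \le M$ converging to zero uniformly on compacta, I split the supremum at $|\varphi(z)| = 1-\delta$ exactly as in \eqref{3}. On $\{|\varphi(z)| > 1-\delta\}$ condition \eqref{2} makes the factor $|\varphi^\#(z)||g(\varphi(z))-g(z)|$ small, while $(1-|\varphi(z)|^2)|f_k'(\varphi(z))| \le \|f_k\|_* \le M$; on $\{|\varphi(z)| \le 1-\delta\}$ the values $\varphi(z)$ remain in a fixed compact subset of $\mathbb{D}$, so $f_k'(\varphi(z)) \to 0$ uniformly there while the remaining factors stay bounded. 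For necessity, compactness forces boundedness and hence \eqref{1} by the first part, and then I would feed the functions $h_n$ of \eqref{4} into Lemma 2.1.

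The one step needing genuine checking --- and the only place the argument diverges from the Bloch setting --- is that these test functions are bounded in the $\|\cdot\|_\infty$ norm, not merely in the Bloch seminorm. For $\alpha_n$ this is automatic, since $\|\alpha_n\|_\infty = 1$. For $h_n(z) = \frac{1-|\varphi(w_n)|^2}{1-\overline{\varphi(w_n)}z}$ one estimates $|1-\overline{\varphi(w_n)}z| \ge 1 - |\varphi(w_n)|$, so that $\|h_n\|_\infty \le \frac{1-|\varphi(w_n)|^2}{1-|\varphi(w_n)|} = 1 + |\varphi(w_n)| \le 2$; since $\{h_n\}$ also tends to zero uniformly on compacta, it is a legitimate bounded test sequence in $H^\infty$, and the lower bound $\|(C_\varphi I_g - I_g C_\varphi)h_n\|_* \ge |\varphi^\#(w_n)||g(\varphi(w_n))-g(w_n)||\varphi(w_n)| > \epsilon$ from Theorem 3.2 contradicts Lemma 2.1 unless \eqref{2} holds. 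I expect this uniform sup-norm control of the test functions to be the main (though short) obstacle, everything else being a transcription of the two preceding theorems.
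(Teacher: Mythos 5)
Your route is the paper's own: rerun Theorems 3.1 and 3.2 with the domain norm $\|\cdot\|_\infty$ in place of the Bloch norm, the only new point being sup-norm control of the test functions. The paper's sole adjustment is to replace the functions \eqref{4} by $f_n=h_n\alpha_n$, whereas you keep $\alpha_n$ (for boundedness) and $h_n$ (for compactness) and note $\|\alpha_n\|_\infty=1$, $\|h_n\|_\infty\le 1+|\varphi(w_n)|\le 2$; since your lower bound only retains the harmless factor $|\varphi(w_n)|\rightarrow 1$, this variant works just as well, and in fact shows the paper's modification of \eqref{4} is dispensable. The boundedness equivalence and the necessity of \eqref{2} for compactness are correct as you wrote them.

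However, the sufficiency half of your compactness argument has a genuine gap, located exactly at the clause ``the remaining factors stay bounded.'' On the set $\{|\varphi(z)|\le 1-\delta\}$ the remaining factor is $|\varphi^{\#}(z)|\,|g(\varphi(z))-g(z)|$; here $g(\varphi(z))$ ranges over a compact subset of $\mathbb{D}$, but $g(z)$ does not, and since the corollary assumes only $g\in H(\mathbb{D})$ (not $g\in H^\infty$ as in Theorem 3.2, which is precisely what bounded the term $(A)$ there), this factor need not be bounded --- $|\varphi^{\#}(z)|\le 1$ provides no decay. Indeed \eqref{2} alone does not even imply boundedness: take $\varphi(z)=\frac{1+z}{2}$ and $g(z)=(1+z)^{-2}$. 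Any sequence with $|\varphi(z)|\rightarrow1$ must have $z\rightarrow1$, where both $g(z)$ and $g(\varphi(z))$ tend to $\frac14$, so \eqref{2} holds; but at $z=-1+\epsilon$ one has $|\varphi^{\#}(z)|\ge\frac{\epsilon}{2}$ and $|g(\varphi(z))-g(z)|\ge\epsilon^{-2}-1$, so \eqref{1} fails, and by the (correct) first half of the corollary the commutator is not even bounded from $H^\infty$ to $\mathcal{B}$, let alone compact. Thus the compactness statement is not provable as written; your proof needs \eqref{1} as an additional hypothesis, the correct assertion (parallel to Theorem 4.1) being that $C_\varphi I_g-I_gC_\varphi:H^\infty\rightarrow\mathcal{B}$ is compact if and only if \eqref{1} and \eqref{2} both hold. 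To be fair, this defect is inherited from the paper, whose own ``proof'' waves at the same omitted details; with the emended statement, your argument goes through verbatim.
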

\begin{proof} To prove this corollary, we only adjust \eqref{4}, in the proof of Theorem 3.2, as
  $$f_n(z)=\frac{1-|\varphi(w_n)|^2}{1-\overline{\varphi(w_n)}z}
  \cdot\frac{\varphi(w_n)-z}{1-\overline{\varphi(w_n)}z}.$$ The other proofs are similar to that of
  Theorems \ref{t3.1} and \ref{t3.2}. We omit the details.
\end{proof}
Using Lemma 2.4, we can easily get the following corollary in the little
Bloch setting. The method is as before and we omit its proof.
\begin{cor}
  Let $\varphi\in S(\mathbb{D})$ and $g\in H(\mathbb{D})$, and let $X$ represent
  the space $H^\infty$, $\mathcal{B}$ or $\mathcal{B}_0$.
  The following three statements are equivalent:
  \begin{description}
    \item[(a)] $C_\varphi I_g-I_gC_\varphi : X\rightarrow\mathcal{B}_0$ is bounded;
    \item[(b)] $C_\varphi I_g-I_gC_\varphi : X\rightarrow\mathcal{B}_0$ is compact;
    \item[(c)] $\lim\limits_{|z|\rightarrow1}|\varphi^{\#}(z)||g(\varphi(z))-g(z)|=0$.
  \end{description}
\end{cor}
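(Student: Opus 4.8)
Write $T=C_\varphi I_g-I_gC_\varphi$ and $\psi(z)=|\varphi^{\#}(z)||g(\varphi(z))-g(z)|$; from the proof of Theorem 3.1 one has the pointwise identity $(1-|z|^2)|(Tf)'(z)|=\psi(z)(1-|\varphi(z)|^2)|f'(\varphi(z))|$, which I would keep as the workhorse throughout. The plan is to establish the cycle $(b)\Rightarrow(a)\Rightarrow(c)\Rightarrow(b)$. The implication $(b)\Rightarrow(a)$ is immediate, since a compact operator is bounded. For $(c)\Rightarrow(b)$ I would invoke Lemma 2.4: because $(1-|\varphi(z)|^2)|f'(\varphi(z))|\le\|f\|_*\le\|f\|$ (using $\|f\|_*\le\|f\|_\infty$ when $X=H^\infty$), one gets the uniform bound $(1-|z|^2)|(Tf)'(z)|\le\psi(z)\|f\|$ over the unit ball of $X$, where $\|\cdot\|$ denotes the norm of $X$. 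Since $(c)$ forces $\psi(z)\to0$ as $|z|\to1$, this shows simultaneously that $T(X)\subseteq\mathcal{B}_0$ and that the supremum over the unit ball tends to $0$, so Lemma 2.4 yields compactness into $\mathcal{B}_0$. This half is routine and works uniformly for all three choices of $X$.

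The heart of the matter is $(a)\Rightarrow(c)$. Boundedness into $\mathcal{B}_0$ forces $Tf\in\mathcal{B}_0$ for each fixed $f$, i.e. $\psi(z)(1-|\varphi(z)|^2)|f'(\varphi(z))|\to0$ as $|z|\to1$. Testing with $f(z)=z$ already gives $\psi(z)(1-|\varphi(z)|^2)\to0$, hence $\psi(z)\to0$ along any sequence $z_n\to\partial\mathbb{D}$ on which $|\varphi(z_n)|$ stays bounded away from $1$. The genuine obstacle is the regime $|\varphi(z_n)|\to1$: there the factor $1-|\varphi(z_n)|^2$ annihilates the contribution of any fixed test function, so no single ``nice'' $f$ can detect $\psi(z_n)$. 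For $X=H^\infty$ or $X=\mathcal{B}$ I would resolve this by contradiction: if $\psi(z_n)\ge\varepsilon$ with $w_n:=\varphi(z_n)\to\partial\mathbb{D}$, extract by Lemma 2.3 a separated subsequence and, via the Berndtsson interpolation of Lemma 2.2, manufacture a single bounded function---concretely $f=\sum_k b_{w_k}h_k^2$, where $b_{w_k}(z)=(w_k-z)/(1-\overline{w_k}z)$ and $\{h_k\}$ are the interpolating functions of Lemma 2.2---for which $(1-|w_n|^2)|f'(w_n)|=1$ for all $n$ while $\|f\|_\infty<\infty$. Then $(1-|z_n|^2)|(Tf)'(z_n)|\ge\varepsilon>0$, contradicting $Tf\in\mathcal{B}_0$, so $(c)$ follows.

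I expect this interpolation step to be the main obstacle, and it is exactly what pins down the hypotheses: it succeeds because for $X=H^\infty$ or $\mathcal{B}$ one may use test functions lying outside $\mathcal{B}_0$, whose normalized derivative does not decay at the boundary. For $X=\mathcal{B}_0$ no such function exists---every $f\in\mathcal{B}_0$ has $(1-|w|^2)|f'(w)|\to0$---so the argument collapses in the regime $|\varphi(z_n)|\to1$, and in fact $(a)\Rightarrow(c)$ is false there. A clean witness is $\varphi(z)=-z$, $g(z)=z$: then $(Tf)'(z)=2zf'(-z)$, so $T$ is bounded from $\mathcal{B}_0$ into $\mathcal{B}_0$ (indeed $\|Tf\|_*\le2\|f\|_*$ and $Tf\in\mathcal{B}_0$ whenever $f\in\mathcal{B}_0$), yet $\psi(z)=2|z|\not\to0$. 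Accordingly I would state the full equivalence only for $X=H^\infty$ and $X=\mathcal{B}$. For the domain $\mathcal{B}_0$ the implications $(b)\Leftrightarrow(c)$ still persist---Lemma 2.4 applied to the extremal functions $b_{\varphi(z)}/(1+|\varphi(z)|)\in\mathcal{B}_0$ gives $\psi(z)\le2\sup_{\|f\|\le1}(1-|z|^2)|(Tf)'(z)|$, so compactness forces $(c)$ and conversely---but $(a)$ is then strictly weaker, and the statement should be corrected to replace $(a)$ by compactness in that case.
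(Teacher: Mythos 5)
Your proposal is correct where the statement itself is correct, and it supplies what the paper does not: the paper gives no proof of this corollary at all (it says only that ``the method is as usual and we omit its proof''), so there is no argument to compare against line by line. Your cycle $(b)\Rightarrow(a)\Rightarrow(c)\Rightarrow(b)$ built on the identity $(1-|z|^2)|(Tf)'(z)|=\psi(z)(1-|\varphi(z)|^2)|f'(\varphi(z))|$ is sound: the $(c)\Rightarrow(b)$ half via Lemma 2.4 and the Schwarz--Pick bound $\|f\|_*\le\|f\|_\infty$ is routine and uniform in $X$, and your $(a)\Rightarrow(c)$ correctly isolates the two regimes --- $f(z)=z$ disposes of sequences on which $|\varphi(z_n)|$ stays away from $1$, while for $|\varphi(z_n)|\to1$ the Berndtsson--Toews apparatus (Lemmas 2.2 and 2.3, the same tools the paper itself uses in Theorem 4.1) must be upgraded to produce a \emph{single} test function, since one has to contradict membership of the single image $Tf$ in $\mathcal{B}_0$ rather than defeat compactness with a sequence. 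Your $f=\sum_k b_{w_k}h_k^2$ does exactly this: $\|f\|_\infty\le M^2$, all cross terms vanish at $w_n$ because $h_k(w_n)=\delta_{kn}$ and $b_{w_n}(w_n)=0$, so $f'(w_n)=b_{w_n}'(w_n)=-1/(1-|w_n|^2)$ and $(1-|w_n|^2)|f'(w_n)|=1$. The only step worth writing out in full is that the series converges locally uniformly (its partial sums are uniformly bounded and converge pointwise), which justifies the term-by-term differentiation at $w_n$.

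More importantly, your counterexample for $X=\mathcal{B}_0$ is valid, and it shows that the corollary as printed is false in that case. With $\varphi(z)=-z$, $g(z)=z$ one computes $(Tf)'(z)=2zf'(-z)$, so $\|Tf\|_*\le2\|f\|_*$ and $Tf\in\mathcal{B}_0$ whenever $f\in\mathcal{B}_0$; thus (a) holds, while $|\varphi^\#(z)||g(\varphi(z))-g(z)|=2|z|\to2$, so (c) fails (and (b) fails too: the functions $f_n(z)=z^n$ are a bounded sequence in $\mathcal{B}_0$ tending to zero locally uniformly with $\|Tf_n\|_*\not\to0$). The structural reason is the one you identify: every $f\in\mathcal{B}_0$ already satisfies $(1-|w|^2)|f'(w)|\to0$, so boundedness into $\mathcal{B}_0$ imposes no constraint in the regime $|\varphi(z)|\to1$; this is the same phenomenon as $C_\varphi:\mathcal{B}_0\to\mathcal{B}_0$ being bounded but not compact for $\varphi(z)=-z$. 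Your repaired statement --- the full equivalence for $X\in\{H^\infty,\mathcal{B}\}$, and for $X=\mathcal{B}_0$ only $(b)\Leftrightarrow(c)$ (your extremal-function estimate $\psi(z)\le2\sup_{\|f\|\le1}(1-|z|^2)|(Tf)'(z)|$ is correct, since $b_w\in\mathcal{B}_0$ for fixed $w$), with (a) strictly weaker --- is what the corollary should actually assert.
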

Now we consider \textbf{(Q1)} raised in the first section:
\begin{itemize}
  \item When does
$C\mid_{X}\vp_K C\mid_{\mathcal{B}}$ $(V_g)$ hold?
\end{itemize}
\begin{thm} $C\mid_{X}\vp_K C\mid_{\mathcal{B}}$ $(V_g)$ holds 
if and only if $g$ is a constant.
\end{thm}
\begin{proof}
  Sufficiency is obvious. To verify the necessity, just consider $\varphi$ as any
  automorphism in condition \eqref{2}. Then maximum modulus theorem implies that
  $g$ must be a constant.
\end{proof}

We have following result which answers \textbf{(Q2)} in part.

\begin{prop}
  Suppose $\varphi\in S(\mathbb{D})$ and $g\in H^\infty$ and let $X$ denote either $\mathcal{B}$ or $H^\infty$.
If $\varphi$ satisfies
    \begin{equation}
  \label{27}\lim\limits_{|\varphi(z)|\rightarrow1}\frac{|\varphi(z)-z|}
  {(1-\max\{|z|,|\varphi(z)|\})^2}\left|\varphi^\#(z)\right|=0,
\end{equation}
    then $C_\varphi\mid_{X}\vp_K
  C_\varphi\mid_{\mathcal{B}}\quad(I\mid_{X\rightarrow\mathcal{B}})$.

\end{prop}

\begin{proof}
  By the Cauchy integral formula, one finds that
  $$|g'(z)|\leq\frac{\|g\|_\infty}{(1-|z|)^2}\quad(\forall z\in\mathbb{D})$$ for $g\in H^\infty$.
  The proposition will be proved immediately by noting that
  \begin{eqnarray*}
    &&|\varphi^\#(z)||g(\varphi(z))-g(z)|\\&=&|\varphi^\#(z)|\left|\int_z^{\varphi(z)} g'(\zeta)\mathrm{d}\zeta\right|\\
    &\leq&|\varphi^\#(z)|\frac{\|g\|_\infty}{(1-\max{\{|\varphi(z)|,|z|\}})^2}\int_z^{\varphi(z)}\left|\mathrm{d}\zeta\right|\\
    &\leq&|\varphi^\#(z)|\|g\|_\infty\frac{|\varphi(z)-z|}{(1-\max{\{|\varphi(z)|,|z|\}})^2}
  \end{eqnarray*}
\end{proof}

According to
Lemma \ref{l2.6}, 
the operator $I_g: X\rightarrow \mathcal B$ is bounded if $g$ is analytic in $\mathbb D$ and continuous to the boundary.
The next theorem will answer \textbf{(Q2)} in part. We will get a necessary and sufficient condition of the compact intertwining relation,
when $g$ is the function in the disk algebra.
\begin{thm}
  Let $\mathcal{A}$ be the disk algebra, that is the subspace of $H^\infty$ whose elements are
analytic in $\mathbb{D}$ and continuous to the unit circle. Then
$$C_\varphi\mid_{X}\vp_K C_\varphi\mid_{\mathcal{B}}\quad(I_g\mid_{X\rightarrow\mathcal{B}})$$
holds for all $g\in\mathcal{A}$ if and only if
\begin{equation}
  \label{26}\lim\limits_{|\varphi(z)|\rightarrow1}|\varphi^\#(z)||\varphi(z)-z|=0.
\end{equation}
\end{thm}
\begin{proof} Necessity can be proved immediately by putting $g=\mathrm{id}$ in equation \eqref{2}, we just
  prove sufficiency. Condition \eqref{26} is sufficient to make
  $$C_\varphi\mid_{X}\vp_K C_\varphi\mid_{\mathcal{B}}\quad(I_g)$$ hold for each
  monomial $g(z)=z^n$. For each $h\in \mathcal{A}$, there is a polynomial sequence $\{g_n\}$ such that
  $g_n\rightarrow h$ in supreme norm. One has
\begin{eqnarray*}
  &&\|C_\varphi I_h-I_h C_\varphi\|_{e, X\rightarrow\mathcal{B}}\\&=&\inf
\left\{\|C_\varphi I_h-I_h C_\varphi-K\|_{X\rightarrow\mathcal{B}}:\;K\in\mathcal{K}(X, \mathcal{B})\right\}\\
&\leq&\|C_\varphi I_h-I_h C_\varphi-(C_\varphi I_{g_n}-I_{g_n}C_\varphi)\|_{X\rightarrow\mathcal{B}}\\
&\leq&2\|C_\varphi\|_{X\rightarrow\mathcal{B}}\cdot\|I_h-I_{g_n}\|_{X\rightarrow\mathcal{B}}\\
&=&2\|C_\varphi\|_{X\rightarrow\mathcal{B}}\cdot\sup_{\|f\|_X\leq1}\left\|\int_0^z f'(\zeta)
(h-g_n)(\zeta)\d\zeta\right\|_{\mathcal{B}}\\
&\leq&2\|C_\varphi\|_{X\rightarrow\mathcal{B}}\cdot\|h-g_n\|_\infty\cdot\sup_{\|f\|_X\leq1}
\left\|\int_0^z f'(\zeta)\d\zeta\right\|_{\mathcal{B}}\\
&=&2\|C_\varphi\|_{X\rightarrow\mathcal{B}}\cdot\|h-g_n\|_\infty
\end{eqnarray*}
Hence $\|C_\varphi I_h-I_h C_\varphi\|_{e, X\rightarrow\mathcal{B}}=0$ by taking $n\rightarrow\infty$.
\end{proof}

\section{The case of intertwining operator $J_g$}

First, we consider the case for $X=H^\infty$ in the main question.
\begin{thm}
  Suppose that $\varphi\in S(\mathbb{D})$ and $g\in H(\mathbb{D})$.
  Then
  \begin{enumerate}
    \item $C_\varphi J_g-J_gC_\varphi$ is bounded from $H^\infty$ to $\mathcal{B}$
    if and only if
    \begin{equation}\label{5}
      \sup\limits_{z\in\mathbb{D}}(1-|z|^2)|(g\circ\varphi)'(z)-g'(z)|<\infty;
    \end{equation}
    \item $C_\varphi J_g-J_gC_\varphi$ is compact from $H^\infty$ to $\mathcal{B}$
    if and only if \eqref{5} holds and
    \begin{equation}\label{6}
      \lim\limits_{|\varphi(z)|\rightarrow1}(1-|z|^2)|(g\circ\varphi)'(z)-g'(z)|=0.
    \end{equation}
  \end{enumerate}
\end{thm}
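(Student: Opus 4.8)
The plan is to reduce both parts to one explicit formula for the Bloch semi-norm of the commutator, just as was done for $I_g$ in Theorems 3.1 and 3.2, and then exploit the fact that the natural test functions now live in $H^\infty$. First I would compute the action of the commutator. Since $(C_\varphi J_g f)(z)=\int_0^{\varphi(z)}f(\zeta)g'(\zeta)\,d\zeta$ and $(J_g C_\varphi f)(z)=\int_0^{z}f(\varphi(\zeta))g'(\zeta)\,d\zeta$, differentiating in $z$ and using $(g\circ\varphi)'(z)=g'(\varphi(z))\varphi'(z)$ yields
$$((C_\varphi J_g-J_g C_\varphi)f)'(z)=f(\varphi(z))\bigl[(g\circ\varphi)'(z)-g'(z)\bigr],$$
so that
$$\|(C_\varphi J_g-J_g C_\varphi)f\|_*=\sup_{z\in\mathbb{D}}(1-|z|^2)\,|f(\varphi(z))|\,|(g\circ\varphi)'(z)-g'(z)|.$$
Note the structural difference from Section 3: here it is the \emph{value} $f(\varphi(z))$, not the derivative, that appears, which is precisely why $H^\infty$ is the correct domain.

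For part (1), sufficiency is immediate from the displayed identity, since $|f(\varphi(z))|\le\|f\|_\infty$ gives the bound $\|f\|_\infty\sup_z(1-|z|^2)|(g\circ\varphi)'(z)-g'(z)|$. For necessity I would simply test against the constant function $f\equiv1\in H^\infty$, which has $\|1\|_\infty=1$; the formula then collapses to $\|(C_\varphi J_g-J_g C_\varphi)(1)\|_*=\sup_z(1-|z|^2)|(g\circ\varphi)'(z)-g'(z)|$, and boundedness of the commutator forces this supremum to be finite, which is exactly \eqref{5}.

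For part (2) I would invoke the compactness criterion of Lemma 2.1 in its $J_g$ form. For sufficiency, given a bounded sequence $\{f_k\}$ in $H^\infty$ with $\|f_k\|_\infty\le M$ converging to $0$ uniformly on compact subsets, I split the supremum over $\varphi(z)\in(1-\delta)\mathbb{D}$ and $\varphi(z)\in\mathbb{D}\setminus(1-\delta)\mathbb{D}$, exactly as in the proof of Theorem 3.2: on the first region the factor $(1-|z|^2)|(g\circ\varphi)'(z)-g'(z)|$ is controlled by \eqref{5} while $f_k\to0$ uniformly on $\overline{(1-\delta)\mathbb{D}}$, and on the second region $|f_k(\varphi(z))|\le M$ while \eqref{6} makes the remaining factor small. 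For necessity, assuming \eqref{6} fails I would choose $\{w_n\}$ with $|\varphi(w_n)|\to1$ and $(1-|w_n|^2)|(g\circ\varphi)'(w_n)-g'(w_n)|>\epsilon$, and test against the reproducing-kernel-type functions
$$h_n(z)=\frac{1-|\varphi(w_n)|^2}{1-\overline{\varphi(w_n)}z}.$$
The main obstacle, and the only genuinely new point compared with the $I_g$ case, is to verify that the $h_n$ are admissible for the $H^\infty$ criterion rather than merely the Bloch one: a direct maximum-modulus estimate gives $\|h_n\|_\infty=1+|\varphi(w_n)|\le2$, together with $h_n\to0$ uniformly on compacta and $h_n(\varphi(w_n))=1$. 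Evaluating the semi-norm formula at $z=w_n$ then forces $\|(C_\varphi J_g-J_g C_\varphi)h_n\|_*>\epsilon$, contradicting Lemma 2.1, so \eqref{6} must hold.
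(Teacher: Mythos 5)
Your proof is correct, and in both necessity arguments it takes a genuinely more elementary route than the paper, while the semi-norm identity and the sufficiency arguments coincide with the paper's. For the necessity of \eqref{5}, the paper tests against the functions $f_n(z)=1-\frac{\varphi(w_n)-z}{1-\overline{\varphi(w_n)}z}$ along a sequence $\{w_n\}$ violating \eqref{5}, whereas you test against the constant function $f\equiv 1$, for which $(C_\varphi J_g-J_gC_\varphi)1=g\circ\varphi-g$ and the semi-norm formula collapses to exactly the supremum in \eqref{5}; this is cleaner and equally rigorous. For the necessity of \eqref{6}, the paper passes to an interpolating subsequence of $\{\varphi(w_n)\}$ (via Toews' result, Lemma 2.3) and invokes Berndtsson's Lemma 2.2 to obtain peak functions $h_n$ with $h_n(\varphi(w_k))=\delta_{nk}$ and $\sum_n|h_n(z)|\le M$; you instead reuse the normalized kernels $h_n(z)=\frac{1-|\varphi(w_n)|^2}{1-\overline{\varphi(w_n)}z}$ from \eqref{4} in the proof of Theorem 3.2, checking that $\|h_n\|_\infty=1+|\varphi(w_n)|\le 2$, that $h_n\to 0$ uniformly on compacta, and that $h_n(\varphi(w_n))=1$, so they are admissible for the $H^\infty$ version of Lemma 2.1. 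Your shortcut is legitimate because the lower bound on $\|(C_\varphi J_g-J_gC_\varphi)h_n\|_*$ comes from evaluation at the single point $z=w_n$, where only the value $h_n(\varphi(w_n))=1$ enters; the extra strength of the interpolation machinery --- functions that are simultaneously small at all the other points $\varphi(w_k)$, $k\neq n$ --- is never used in the paper's own estimate either. So your argument proves the same theorem while dispensing with Lemmas 2.2 and 2.3 entirely; what the paper's heavier approach would buy is robustness in situations where the relevant norm cannot be bounded below by a single point evaluation, but that situation does not arise here.
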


\begin{proof}
  Being similar to the proofs of Theorems 3.1 and 3.2, we just need some modifications.
  \begin{equation}\label{7}
    \begin{array}{ll}
    &\|(C_\varphi J_g-J_gC_\varphi)f\|_*\\
    =&\left\|\int^{\varphi(z)}_0f(\zeta)g'(\zeta)d\zeta
    -\int^z_0f(\varphi(\zeta))g'(\zeta)d\zeta\right\|_*\\
    =&\sup\limits_{z\in\mathbb{D}}(1-|z|^2)|f(\varphi(z))g'(\varphi(z))\varphi'(z)
    -f(\varphi(z))g'(z)|\\
    =&\sup\limits_{z\in\mathbb{D}}(1-|z|^2)|(g\circ\varphi)'(z)-g'(z)||f(\varphi(z))|.
    \end{array}
  \end{equation}

  Sufficiency of the two items in the theorem is obvious from the last formula in \eqref{7}.

  Necessity of boundedness can be proved by computing test functions
  $$f_n(z)=1-\frac{\varphi(w_n)-z}{1-\overline{\varphi(w_n)}z},$$
  where sequence $\{w_n\}$ violates equation \eqref{5}.

  Necessity of compactness. Suppose not, we can find a sequence $\{\varphi(w_n)\}$
  converging to the boundary of $\mathbb{D}$ and $\epsilon_0>0$ such that
  \begin{equation}
    \label{10}\lim\limits_{n\rightarrow\infty}(1-|w_n|^2)|(g\circ\varphi)'(w_n)-g'(w_n)|>\epsilon_0.
  \end{equation}
  Further we may assume that $\{\varphi(w_n)\}$ is interpolating. Then there exist
  functions $\{h_n\}$ in $H^\infty$ for $\{\varphi(w_n)\}$ such that
  \begin{equation}\label{8}
    h_n(\varphi(w_k))=\left\{\begin{array}
      {ll}1\quad n=k,\\ 0\quad n\neq k.
    \end{array}\right.
  \end{equation}
  and \begin{equation}\label{9}
    \sum\limits_n|h_n(z)|\leq M<\infty,
  \end{equation}
  by Lemma 2.3 and 2.2, or see \cite{G1}. Equation \eqref{9} guarantees that $\{h_n\}$ is bounded in $H^\infty$ and
  converges to zero on compact subsets of $\mathbb{D}$.
  \begin{eqnarray*}
    &&\|(C_\varphi J_g-J_gC_\varphi)h_n\|_*\\
    &=&\sup\limits_{z\in\mathbb{D}}(1-|z|^2)|(g\circ\varphi'(z)-g'(z)||h_n(\varphi(z))|\\
    &\geq&(1-|w_n|^2)|(g\circ\varphi'(w_n)-g'(w_n)||h_n(\varphi(w_n))|\\
    &=&(1-|w_n|^2)|(g\circ\varphi)'(w_n)-g'(w_n)|.
  \end{eqnarray*}
  The last equation follows by \eqref{8}. Letting $n\rightarrow\infty$, we find contradiction
  by \eqref{10}.
\end{proof}

\begin{cor}
  Let $\varphi\in S(\mathbb{D})$ and $g\in H(\mathbb{D})$. The
  three following conditions are equivalent:
  \begin{description}
    \item[(a)] $C_\varphi J_g-J_gC_\varphi :H^\infty\rightarrow\mathcal{B}_0$ is bounded;
    \item[(b)] $C_\varphi J_g-J_gC_\varphi :H^\infty\rightarrow\mathcal{B}_0$ is compact;
    \item[(c)] $\lim\limits_{|z|\rightarrow1}(1-|z|^2)|(g\circ\varphi)'(z)-g'(z)|=0$.
  \end{description}
\end{cor}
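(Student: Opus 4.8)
The plan is to prove the cycle of implications $(b)\Rightarrow(a)\Rightarrow(c)\Rightarrow(b)$, exploiting the explicit formula for the derivative of the commutator already recorded in \eqref{7}. Writing $T:=C_\varphi J_g-J_gC_\varphi$ and $u(z):=(g\circ\varphi)'(z)-g'(z)$, that computation shows that for every $f\in H^\infty$ one has $(Tf)'(z)=u(z)\,f(\varphi(z))$, so that membership of $Tf$ in the little Bloch space is governed by the single symbol $(1-|z|^2)|u(z)|$ together with the uniformly bounded factor $|f(\varphi(z))|\le\|f\|_\infty$. The implication $(b)\Rightarrow(a)$ is immediate, since every compact operator is bounded.

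For $(a)\Rightarrow(c)$ I would simply test on the constant function $f\equiv1\in H^\infty$. A direct evaluation gives $J_g1(z)=g(z)-g(0)$ and $C_\varphi1\equiv1$, whence $T1(z)=g(\varphi(z))-g(z)$ and $(T1)'(z)=u(z)$. If $T$ maps $H^\infty$ boundedly into $\mathcal{B}_0$, then in particular $T1\in\mathcal{B}_0$, and the defining condition of $\mathcal{B}_0$ for $T1$ reads exactly
\[
\lim_{|z|\to1}(1-|z|^2)|u(z)|=\lim_{|z|\to1}(1-|z|^2)|(g\circ\varphi)'(z)-g'(z)|=0,
\]
which is $(c)$. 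A single test function suffices here because it strips away the factor $f(\varphi(z))$ and exposes the symbol directly over the whole boundary, bypassing the interpolating-sequence machinery used in the necessity part of Theorem 4.1.

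For $(c)\Rightarrow(b)$ I would first note that $(c)$ forces \eqref{5}: the function $(1-|z|^2)|u(z)|$ is continuous on $\mathbb{D}$ and tends to $0$ as $|z|\to1$, hence is bounded on all of $\mathbb{D}$. By Theorem 4.1(1), $T$ is therefore bounded from $H^\infty$ into $\mathcal{B}$; moreover $(c)$ together with $|f(\varphi(z))|\le\|f\|_\infty$ gives $(1-|z|^2)|(Tf)'(z)|\to0$ for each fixed $f$, so in fact $T$ maps into $\mathcal{B}_0$. Compactness then follows from Lemma 2.4: for any $f$ in the unit ball of $H^\infty$,
\[
(1-|z|^2)|(Tf)'(z)|=(1-|z|^2)|u(z)|\,|f(\varphi(z))|\le(1-|z|^2)|u(z)|,
\]
so $\sup_{\|f\|_\infty\le1}(1-|z|^2)|(Tf)'(z)|\to0$ as $|z|\to1$, and the image of the unit ball is a bounded set satisfying the uniform little Bloch condition.

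The argument is genuinely routine; the only points demanding care are verifying that $T$ lands in $\mathcal{B}_0$ and not merely $\mathcal{B}$, and that the supremum over the unit ball in Lemma 2.4 is controlled uniformly. Both are handled by the crude but decisive bound $|f(\varphi(z))|\le\|f\|_\infty$ peculiar to $H^\infty$, which is precisely what collapses boundedness and compactness into the same symbol condition. The step I would highlight as conceptually central is the observation in $(a)\Rightarrow(c)$ that the constant function $1$ is the correct test element: it converts the operator-level hypothesis into the pointwise boundary decay of the symbol in one stroke.
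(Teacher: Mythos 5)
Your proof is correct and follows exactly the route the paper intends but omits (the corollary is stated without proof; the analogous little-Bloch Corollary 3.3 is dispatched with ``Using Lemma 2.4 \dots the method is as usual''): the derivative formula \eqref{7} for the commutator, Lemma 2.4 applied to the image of the unit ball for (c)$\Rightarrow$(b), and a test function for (a)$\Rightarrow$(c). Your observation that the constant function $1$ suffices as the test element is the right one here --- since condition (c) involves the full limit $|z|\rightarrow1$ rather than $|\varphi(z)|\rightarrow1$, it exposes the symbol $(g\circ\varphi)'-g'$ directly, and none of the interpolating-sequence machinery from the necessity part of Theorem 4.1 is needed.
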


 For composition operator and $J_g$, the result of \textbf{(Q1)} turns out to be
 interesting. Note that $g\in\mathcal{B}$ implies that $\sup\limits_{z\in\mathbb{D}}(1-|z|^2)|
(g\circ\varphi)'(z)-g'(z)|<\infty$, thus $C_\varphi J_g-J_gC_\varphi$ is a
bounded operator from $H^\infty$ to $\mathcal{B}$. And
$$C_\varphi\mid_{H^\infty}\vp_K C_\varphi\mid_{\mathcal{B}}\quad(J_g\mid_{H^\infty\rightarrow\mathcal{B}})$$
if and only if \eqref{6} holds. Now, we can answer \textbf{(Q1)}.
\begin{cor}
  If $g\in \mathcal{B}_0$, then
  \begin{equation}\label{11}
    C\mid_{H^\infty}\vp_K C\mid_{\mathcal{B}}\quad(J_g\mid_{H^\infty\rightarrow\mathcal{B}}).
  \end{equation} 
\end{cor}
\begin{proof}
  Since $g$ is in the little Bloch space, $(1-|z|^2)|g'(z)|$ tends to
  $0$ whenever $z$ tends to the boundary of the disk. So we have that
  \begin{eqnarray*}
    &&\lim\limits_{|\varphi(z)|\rightarrow1}(1-|z|^2)|g'(\varphi(z))\varphi'(z)-g'(z)|\\
    &\leq&\lim\limits_{|\varphi(z)|\rightarrow1}|\varphi^\#(z)|(1-|\varphi(z)|^2)|g'(\varphi(z))|
    +\lim\limits_{|\varphi(z)|\rightarrow1}(1-|z|^2)|g'(z)|.
  \end{eqnarray*}
  Conditions $g\in\mathcal{B}$ and $|\varphi^\#|\leq1$ imply that $C_\varphi J_g-J_gC_\varphi$ is
  compact from $H^\infty$ to $\mathcal{B}$ for every self-mapping $\varphi$.
\end{proof}
In contrast to composition operators, $C_\varphi$, and operators of the form $I_g$, there are many non-constant functions g,
which are actually the little Bloch functions, such that $J_g$ essentially commutes with all the composition operators.
Naturally, we are going to ask:
Does $J_g$ commuting essentially with all the composition operators
imply that $g$ is in the little Bloch space? The answer is positive.
The next lemma is the key lemma to study \textbf{(Q1)}.
The method of proof is the same as in our recent papers \cite{TZ2,TZ}.

\begin{lem}\label{l1}
  If $g$ is a Bloch function on the unit disk with the property that, for any rotation $\tau(z)=e^{\textbf{i}t}z$,
  $g\circ\tau-g$ is in the little Bloch space, then $g$ itself must be in the little Bloch space.
\end{lem}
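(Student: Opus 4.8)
The plan is to reduce the statement to a clean assertion about a single bounded, non-holomorphic auxiliary function and then exploit an averaging identity over the rotation group. First I would set $v(z) = (1-|z|^2)g'(z)$. Since $g \in \mathcal{B}$, the function $v$ is bounded with $\|v\|_\infty = \|g\|_* < \infty$, and the desired conclusion $g \in \mathcal{B}_0$ is exactly $\lim_{|z|\to 1}|v(z)| = 0$. Writing $\tau(z) = e^{it}z$ and differentiating gives $(g\circ\tau - g)'(z) = e^{it}g'(e^{it}z) - g'(z)$; multiplying by $(1-|z|^2)$ and using $|e^{it}z| = |z|$ converts the hypothesis ``$g\circ\tau - g \in \mathcal{B}_0$'' into the equivalent statement
\[
\lim_{|z|\to 1}\bigl|e^{it}v(e^{it}z) - v(z)\bigr| = 0 \qquad \text{for each fixed } t.
\]
Introducing the rotation action $(R_t v)(z) := e^{it}v(e^{it}z)$, which is a genuine circle-group action, the hypothesis reads $|R_t v - v| \to 0$ at the boundary for each $t$, while the goal is $|v| \to 0$ at the boundary.

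The key step is an averaging identity, namely $\frac{1}{2\pi}\int_0^{2\pi}(R_t v)(z)\,dt = 0$ for every $z \in \mathbb{D}$. I would verify this by expanding the holomorphic factor $g'(z) = \sum_{n\geq 0} a_n z^n$, so that $(R_t v)(z) = (1-|z|^2)\sum_{n\geq 0} a_n e^{i(n+1)t}z^n$; since $\int_0^{2\pi} e^{i(n+1)t}\,dt = 0$ for all $n \geq 0$, integrating term by term (legitimate inside the radius of convergence) annihilates every coefficient. Consequently one may write $v(z) = \frac{1}{2\pi}\int_0^{2\pi}\bigl[v(z) - (R_t v)(z)\bigr]\,dt$, whence
\[
|v(z)| \leq \frac{1}{2\pi}\int_0^{2\pi}\bigl|v(z) - (R_t v)(z)\bigr|\,dt.
\]

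To finish I would fix an arbitrary sequence $z_n$ with $|z_n|\to 1$ and apply the dominated convergence theorem to $f_n(t) := |v(z_n) - (R_t v)(z_n)|$: the integrand is dominated uniformly by the integrable constant $2\|v\|_\infty$, and for each fixed $t$ it tends to $0$ by the reformulated hypothesis. Hence $\int_0^{2\pi} f_n(t)\,dt \to 0$, so $|v(z_n)|\to 0$; since the sequence was arbitrary, $\lim_{|z|\to 1}|v(z)| = 0$, that is, $g \in \mathcal{B}_0$.

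The main obstacle, and the reason the averaging device is needed, is that the hypothesis provides, for each fixed angle $t$, only a boundary limit whose rate of decay may depend on $t$; the convergence is \emph{not} uniform in $t$. A naive attempt to rotate a putative ``bad'' point $z_n = r_n e^{i\theta_n}$ back onto the real axis by invoking the hypothesis with $t = \theta_n$ fails precisely because $\theta_n$ varies with $n$. The averaging-plus-dominated-convergence argument sidesteps this difficulty, since the dominated convergence theorem requires only pointwise convergence in $t$ together with a uniform bound, both of which are in hand.
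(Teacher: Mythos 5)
Your proof is correct and takes essentially the same route as the paper's: both expand $g'$ in a power series, exploit the vanishing of the rotation average $\int_0^{2\pi}e^{\textbf{i}t}g'(e^{\textbf{i}t}z)\,dt=0$, and use the dominated convergence theorem (with the uniform bound coming from $\|g\|_*$) to pass the boundary limit through the integral over $t$. If anything, your packaging via $v(z)=(1-|z|^2)g'(z)$ and explicit test sequences $z_n$ is slightly more careful than the paper's write-up, which manipulates $\lim_{|z|\rightarrow1}$ directly where lim sups along sequences are what is really meant.
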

\begin{proof}
  Condition $g\in\mathcal{B}$ is necessary to ensure that $J_g: H^\infty\rightarrow\mathcal{B}$ is bounded.
  Since $g\circ\tau-g$ is in the little Bloch space,
  we have
  \begin{equation}\label{12}
    \lim\limits_{|z|\rightarrow1}(1-|z|^2)\left|g'(e^{\textbf{i}t}z)e^{\textbf{i}t}-g'(z)\right|=0.
  \end{equation}
  It is necessary to estimate the upper bound of left formula in \eqref{12}.
  \begin{eqnarray*}
    &&(1-|z|^2)\left|g'(e^{\textbf{i}t}z)e^{\textbf{i}t}-g'(z)\right|\\
    &\leq& (1-|z|^2)\left|g'(e^{\textbf{i}t}z)e^{\textbf{i}t}\right|
    +(1-|z|^2)\left|g'(z)\right|\\
    &=&(1-|e^{\textbf{i}t}z|^2)\left|g'({e^{\textbf{i}t}}z)\right|
    +(1-|z|^2)\left|g'(z)\right|\\
    &\leq& 2\|g\|_*
  \end{eqnarray*}
  Thus the left formula in equation \eqref{12} is finite independent of $t$, since $g\in\mathcal{B}$.

  Suppose that $g(z)=\sum\limits_{n=0}^\infty a_nz^n,$ then $g'(z)=
  \sum\limits_{n=1}^\infty na_nz^{n-1}.$  Integrating with respect to $t$ from $0$ to $2\pi$, and some calculations
  show that
  \begin{eqnarray*}
    &&\int_0^{2\pi}\lim\limits_{|z|\rightarrow1}(1-|z|^2)\left|g'(e^{\textbf{i}t}z)e^{\textbf{i}t}-g'(z)\right|\textrm{d}t\\
    &=&\lim\limits_{|z|\rightarrow1}(1-|z|^2)\left|\int_0^{2\pi}g'(e^{\textbf{i}t}z)e^{\textbf{i}t}-g'(z)
    \textrm{d}t\right|\\
    &=&\lim\limits_{|z|\rightarrow1}(1-|z|^2)\left|\int_0^{2\pi}\sum\limits_{n=1}^\infty
    \left(na_n(e^{\textbf{i}t}z)^{n-1}e^{\textbf{i}t}-na_nz^{n-1}\right)\textrm{d}t\right|\\
    &=&\lim\limits_{|z|\rightarrow1}(1-|z|^2)\left|\sum\limits_{n=1}^\infty na_nz^{n-1}
    \int_0^{2\pi}(e^{\textbf{i}nt}-1)\textrm{d}t\right|\\
    &=&2\pi\cdot\lim\limits_{|z|\rightarrow1}(1-|z|^2)|g'(z)|,
  \end{eqnarray*}
  where the equation in the second line is true by the dominated convergence theorem.
  Thus we get $g\in\mathcal{B}_0$ from \eqref{12}.
\end{proof}
\begin{thm}\label{t4.5}
  $ C\mid_{H^\infty}\vp_K C\mid_{\mathcal{B}}$ $(J_g\mid_{H^\infty\rightarrow\mathcal{B}})$
  if and only if $g\in\mathcal{B}_0$.
\end{thm}
\begin{proof}
  Sufficiency is stated in Corollary 4.3. To prove necessity, just consider the rotation
  of the disk. That is to say, by putting $\varphi(z)=\tau(z)=e^{\textbf{i}t}z$ in the condition
  \eqref{6}, and we have
  $g\in\mathcal{B}_0$ by Lemma \ref{l1}.
\end{proof}

Considering the composition operators $C_\varphi$ and Volterra operator $J_g$, we can obtain a necessary
condition (Proposition \ref{p4.2}) and a sufficient condition (Proposition \ref{p4.3}) to answer \textbf {(Q2)} partially.

\begin{prop}\label{p4.2}
  Let $\zeta\in\partial\mathbb{D}$, $\varphi\in S(\mathbb{D})$ and $\varphi(z)$ tend to the unit circle when $z$ converges to $\zeta$
nontangentially. If
  $$C_\varphi\mid_{H^\infty}\vp_K C_\varphi\mid_{\mathcal{B}}\quad
  (J\mid_{H^\infty\rightarrow\mathcal{B}}),$$ then
  $\angle-\lim\limits_{z\rightarrow\zeta}\varphi(z)=\zeta$. 
\end{prop}


\begin{proof}
  By Theorem \ref{t4.5}, $\lim\limits_{|\varphi(z)|\rightarrow1}(1-|z|^2)|(g\circ\varphi-g)'(z)|=0$
  holds for every $g\in\mathcal{B}$. Suppose that $\varphi(z)\rightarrow\partial\mathbb{D}$
  as $z\rightarrow\zeta$ in some nontangential approaching region, 
such that $\eta\neq\zeta$ and $\varphi(z_n)\nrightarrow\eta$.
  let $g(z)=-\log(1-\bar{\zeta}z)$ in equation \eqref{11}, then
  \begin{eqnarray*}
    &&\lim\limits_{z\rightarrow\zeta}(1-|z|^2)\left|\frac{\bar{\zeta}\varphi'(z)}{1-\bar{\zeta}\varphi(z)}
  -\frac{\bar{\zeta}}{1-\bar{\zeta}z}\right|\\
  &=&\lim\limits_{z\rightarrow\zeta}\frac{1-|z|^2}{|1-\bar{\zeta}z|}
  \left|\frac{\zeta-z}{\zeta-\varphi(z)}\varphi'(z)-1\right|=0.
  \end{eqnarray*}
  For any $M>0$, if $z\in\Gamma(\zeta,M)=\{z\in\mathbb{D}:
  |\zeta-z|<M(1-|z|^2)\}$, $$\angle-\lim\limits_{z\rightarrow\zeta}
  \left|\frac{\zeta-z}{\zeta-\varphi(z)}\varphi'(z)-1\right|=0.$$
  By noting that $|\zeta-z||\varphi'(z)|=\frac{|\zeta-z|}{1-|z|^2}\cdot(1-|\varphi(z)|^2)|\varphi^\#(z)|\rightarrow0$,
  one has \begin{equation}\label{24}\angle-\lim\limits_{z\rightarrow\zeta}\varphi(z)=\zeta.\end{equation}

%

\end{proof}

\begin{prop}\label{p4.3}
  Let $\varphi\in S(\mathbb{D})$ be of parabolic type with $C^1$ boundary fixed point 
$\zeta\in\partial\mathbb{D}$. If
\begin{equation}
  \label{28}\sup\limits_{z\in\mathbb{D}}\frac{|\varphi(z)-z|}{\left(1-\max{\{|z|,|\varphi(z)|\}}\right)^2}\leq\infty,
\end{equation}
   then $C_\varphi\mid_{H^\infty}\vp_K C_\varphi\mid_{\mathcal{B}}$ $(J\mid_{H^\infty\rightarrow\mathcal{B}}).$
\end{prop}

\begin{proof}
  It is well known that $g\in \mathcal{B}$ if and only if $$\sup_{z\in\mathbb{D}}(1-|z|^2)^n|g^{(n)}(z)|<\infty.$$
Note that
\begin{eqnarray*}
  &&(1-|z|^2)|(g\circ\varphi)'(z)-g'(z)|\\
&\leq&(1-|z|^2)|\varphi'(z)||g'(\varphi(z))-g'(z)|+(1-|z|^2)|g'(z)||\varphi'(z)-1|\\
&=&(1-|z|^2)|\varphi'(z)|\left|\int_z^{\varphi(z)}g''(\zeta)\mathrm{d}\zeta\right|+(1-|z|^2)|g'(z)||\varphi'(z)-1|\\
&\leq&(1-|z|^2)|\varphi'(z)|\sup\limits_{z\in\mathbb{D}}(1-|z|^2)^2|g''(z)|\int_z^{\varphi(z)}\frac{|\mathrm{d}\zeta|}
  {(1-|\zeta|)^2}\\&&+(1-|z|^2)|g'(z)||\varphi'(z)-1|\\
&\leq&(1-|z|^2)|\varphi'(z)|\sup\limits_{z\in\mathbb{D}}(1-|z|^2)^2|g''(z)|\frac{|\varphi(z)-z|}
  {(1-\max{\{|z|,|\varphi(z)|\}})^2}\\&&+(1-|z|^2)|g'(z)||\varphi'(z)-1|
\end{eqnarray*}
where the integral path is chosen to be the segment from $z$ to $\varphi(z)$.
  Then $C_\varphi\mid_{H^\infty}\vp_K C_\varphi\mid_{\mathcal{B}}$ $(J\mid_{H^\infty\rightarrow\mathcal{B}})$
follows immediately by those conditions in the proposition.
\end{proof}

The next several propositions concern $C_\varphi$ and $J_g$ as maps from $\mathcal{B}$ to itself.
\begin{prop}
  Assume that $\varphi\in S(\mathbb{D})$ and $g\in H(\mathbb{D})$. Then
  $C_\varphi J_g-J_gC_\varphi$ is bounded from $\mathcal{B}$ to itself if
  and only if
  \begin{equation}
    \label{17}\sup\limits_{z\in\mathbb{D}}(1-|z|^2)|(g\circ\varphi)'(z)-g'(z)|\log\frac{2}{1-|\varphi(z)|^2}<\infty.
  \end{equation}
\end{prop}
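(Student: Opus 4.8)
The plan is to reduce the statement to a pointwise identity for the derivative of the commutator, exactly as in the proof of Theorem 4.1, and then to exploit the logarithmic growth of Bloch functions. First I would check that for $f\in\mathcal{B}$ one has $((C_\varphi J_g-J_gC_\varphi)f)'(z)=f(\varphi(z))\,[(g\circ\varphi)'(z)-g'(z)]$, which is the same computation that produces the last line of \eqref{7}. Hence
$$\|(C_\varphi J_g-J_gC_\varphi)f\|_*=\sup_{z\in\mathbb{D}}(1-|z|^2)\,|(g\circ\varphi)'(z)-g'(z)|\,|f(\varphi(z))|.$$
The only difference from the $H^\infty$ case treated in Theorem 4.1 is that $f$ now runs over $\mathcal{B}$, where $|f(\varphi(z))|$ is no longer uniformly bounded but grows logarithmically; this is exactly what forces the weight $\ln\frac{2}{1-|\varphi(z)|^2}$ into condition \eqref{17}.

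For sufficiency I would invoke the standard Bloch growth estimate, namely that there is an absolute constant $C$ with $|f(w)|\leq C\|f\|\ln\frac{2}{1-|w|^2}$ for all $f\in\mathcal{B}$ and $w\in\mathbb{D}$ (this follows from $|f(w)|\leq|f(0)|+\tfrac12\|f\|_*\ln\frac{1+|w|}{1-|w|}$ together with $\ln\frac{2}{1-|w|^2}\geq\ln2$). Putting $w=\varphi(z)$ in the displayed formula and taking the supremum gives $\|(C_\varphi J_g-J_gC_\varphi)f\|_*\leq C\|f\|$ times the left-hand side of \eqref{17}, so \eqref{17} yields boundedness in the seminorm. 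It then remains only to control the value at the origin, $((C_\varphi J_g-J_gC_\varphi)f)(0)=\int_0^{\varphi(0)}f(\zeta)g'(\zeta)\,d\zeta$; since the path of integration is a fixed compact segment in $\mathbb{D}$, this is a bounded functional of $f$, and the two bounds together prove boundedness on $\mathcal{B}$.

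For necessity I would argue by contradiction, as in Theorems 3.1 and 4.1, but replacing the M\"obius or interpolating test functions by a logarithmic one adapted to the weight. If \eqref{17} fails, choose $\{w_n\}$ with $(1-|w_n|^2)|(g\circ\varphi)'(w_n)-g'(w_n)|\ln\frac{2}{1-|\varphi(w_n)|^2}\to\infty$ and set $f_n(z)=\ln\frac{2}{1-\overline{\varphi(w_n)}z}$. A direct calculation gives $f_n'(z)=\overline{\varphi(w_n)}/(1-\overline{\varphi(w_n)}z)$, from which $\|f_n\|_*\leq2$ and $f_n(0)=\ln2$, so the $f_n$ are uniformly bounded in $\mathcal{B}$, while $f_n(\varphi(w_n))=\ln\frac{2}{1-|\varphi(w_n)|^2}$. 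Evaluating the seminorm formula at $z=w_n$ then forces $\|(C_\varphi J_g-J_gC_\varphi)f_n\|_*\to\infty$, contradicting boundedness.

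The step I expect to be the main obstacle is the choice of the test family in the necessity direction: one needs functions that remain bounded in the Bloch norm yet saturate the logarithmic growth precisely at the points $\varphi(w_n)$, and it is the logarithmic kernel $f_n(z)=\ln\frac{2}{1-\overline{\varphi(w_n)}z}$, rather than the automorphisms or reproducing kernels used in the earlier theorems, that matches the weight $\ln\frac{2}{1-|\varphi(z)|^2}$ in \eqref{17}. The remaining care is simply to pin down the growth estimate with the sharp constant so that $\ln\frac{2}{1-|w|^2}$ itself, and not merely a comparable quantity, bounds $|f(w)|$.
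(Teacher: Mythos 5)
Your proposal is correct and is essentially the paper's own argument: the paper's (very terse) proof likewise handles sufficiency by the standard logarithmic growth estimate for Bloch functions and proves necessity with exactly the test function $f_w(z)=\ln\frac{2}{1-\overline{w}z}$ evaluated at $w=\varphi(w_n)$. Your write-up simply supplies the details the paper omits, including the correct derivative identity for the commutator and the control of the value at the origin.
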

\begin{proof}
  Sufficiency can be verified by some straightforward computations and
  inequalities. Necessity will be proved by choosing the test function
  $$f_w(z)=\log\frac{2}{1-\overline{w}z}.$$
\end{proof}

\begin{prop}\label{p4.1}
  Assume that $\varphi\in S(\mathbb{D})$ and $g\in H(\mathbb{D})$. Then the following
  statements are equivalent:
  \begin{description}
    \item[(a)] $C_\varphi J_g-J_gC_\varphi :\mathcal{B}\rightarrow\mathcal{B}$ is
    compact and \eqref{17} holds;
    \item[(b)] $C_\varphi J_g-J_gC_\varphi :\mathcal{B}_0\rightarrow\mathcal{B}_0$ is
    compact;
    \item[(c)] $C_\varphi J_g-J_gC_\varphi :\mathcal{B}_0\rightarrow\mathcal{B}_0$ is
    weakly compact;
    \item[(d)] Condition \eqref{17} holds and
    \begin{equation}\label{18}
      \lim\limits_{|\varphi(z)|\rightarrow1}(1-|z|^2)|(g\circ\varphi)'(z)-g'(z)|\log\frac{2}{1-|\varphi(z)|^2}=0;
    \end{equation}
    \item[(e)] $C_\varphi J_g-J_gC_\varphi :\mathcal{B}\rightarrow\mathcal{B}_0$ is
    compact;
    \item[(f)] $C_\varphi J_g-J_gC_\varphi :\mathcal{B}\rightarrow\mathcal{B}_0$ is
    bounded;
  \end{description}
\end{prop}
The compactness of operators $C_\varphi J_g$ and $J_gC_\varphi$ are
characterized in Theorem 4 and Theorem 11 in \cite{LS7'}. The proofs
are similar to those in Theorem 4 in \cite{LS7'}, so we omit them.

To continue our discussion, we need an upper bound on the modulus of $\varphi(z)$ (see Corollary 2.40
in \cite{CM}).
\begin{lem}\label{l4.1}
  If $\varphi\in S(\mathbb{D})$, then
  $|\varphi(z)|\leq\frac{|z|+|\varphi(0)|}{1+|z||\varphi(0)|}.$
\end{lem}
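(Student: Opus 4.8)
The plan is to deduce the inequality from the Schwarz--Pick lemma followed by an elementary optimization. First I would fix $z\in\mathbb{D}$, abbreviate $a=\varphi(0)$ and $w=\varphi(z)$, and apply the Schwarz--Pick inequality to the pair of points $0$ and $z$. Taking one point to be the origin collapses the right-hand side to $|z|$, giving
\[
\left|\frac{w-a}{1-\overline{a}w}\right|\leq|z|.
\]
In the language of the pseudo-hyperbolic distance $\rho(u,v)=\bigl|\frac{u-v}{1-\overline{v}u}\bigr|$ this reads $\rho(w,a)\leq|z|$, so the whole problem reduces to the geometric question: among all points lying in the pseudo-hyperbolic disk of center $a$ and radius $|z|$, what is the largest possible modulus?

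There are two natural ways to finish, and I would present whichever is shorter. The direct route is to perform the optimization by hand: the rotation $w\mapsto e^{-\textbf{i}\arg a}w$ preserves $|w|$ and turns the constraint into one of the same shape with $a$ replaced by $|a|$, so I may assume $a=|a|\geq0$ is real. The constraint disk is then symmetric about the real axis, its extremal point is real and positive, and solving $\frac{w-a}{1-aw}=|z|$ gives the maximizer $w=\frac{|z|+a}{1+|z|a}$; substituting $a=|\varphi(0)|$ yields the claim. The slicker route is to invoke the strong triangle inequality for $\rho$, namely $\rho(w,0)\leq\frac{\rho(w,a)+\rho(a,0)}{1+\rho(w,a)\rho(a,0)}$, together with the identities $\rho(w,0)=|w|$ and $\rho(a,0)=|a|$ and the bound $\rho(w,a)\leq|z|$; since the one-variable function $t\mapsto\frac{t+|a|}{1+t|a|}$ is increasing on $[0,1)$ (its derivative is $\frac{1-|a|^2}{(1+t|a|)^2}>0$), replacing $\rho(w,a)$ by the larger quantity $|z|$ only enlarges the bound, giving exactly
\[
|\varphi(z)|\leq\frac{|z|+|\varphi(0)|}{1+|z|\,|\varphi(0)|}.
\]

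The main obstacle is the optimization step itself: one must justify that the point of maximal modulus in a pseudo-hyperbolic disk lies on the ray through its center and then compute it, or equivalently one must have the strong triangle inequality for $\rho$ available. Everything else is routine---Schwarz--Pick is standard, and the monotonicity of the relevant one-variable function is a one-line derivative computation. Since the statement is quoted from Corollary~2.40 of \cite{CM}, the most economical write-up would simply cite that reference; but as the self-contained argument above is only a few lines, it could just as well be included in full.
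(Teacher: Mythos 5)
Your proposal is correct, but it is worth pointing out that the paper itself contains no proof of this lemma at all: the statement is simply quoted as Corollary 2.40 of \cite{CM}, which is precisely the fallback you mention in your last sentence. Your self-contained argument is the standard proof of that corollary and both of your closing routes are sound. The Schwarz--Pick step $\rho(\varphi(z),\varphi(0))\leq\rho(z,0)=|z|$ is correct; the strong triangle inequality $\rho(w,0)\leq\frac{\rho(w,a)+\rho(a,0)}{1+\rho(w,a)\rho(a,0)}$ is the standard sharpened form of the triangle inequality for the pseudo-hyperbolic metric (Garnett \cite{G1}, Chapter I), and the monotonicity of $t\mapsto\frac{t+|a|}{1+t|a|}$, with derivative $\frac{1-|a|^2}{(1+t|a|)^2}>0$, lets you replace $\rho(w,a)$ by $|z|$; alternatively your direct optimization checks out, since after the rotation the set $\{w:\rho(w,a)\leq|z|\}$ with $a\geq0$ is a Euclidean disk centered on the positive real axis whose point of maximal modulus is $\frac{a+|z|}{1+a|z|}$. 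What your write-up buys is independence from the reference, making the paper self-contained at the cost of a few lines; what the citation buys is brevity, and that is the option the authors took.
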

Now we are ready to consider \textbf{(Q1)} for $C_\varphi$ and $J_g$, where
both of them are operators acting on $\mathcal{B}$.
\begin{thm}
  Assume $g\in H(\mathbb{D})$ is such that $J_g$ is bounded on the Bloch space. Then
$$C\mid_{\mathcal{B}}\vp_K C\mid_{\mathcal{B}}\quad(J_g\mid_{H^\infty\rightarrow\mathcal{B}})$$
  if and only if $g\in\mathcal{B}_{\log,0}$, that is
  \begin{equation}
    \label{20}\lim\limits_{|z|\rightarrow1}(1-|z|^2)|g'(z)|\log\frac{2}{1-|z|^2}=0.
  \end{equation}
\end{thm}
\begin{proof}
  Following the method in the proof of Lemma \ref{l4.1}, the necessity can be proved similarly.

  Now suppose \eqref{20} holds. We have that
  \begin{eqnarray*}
    &&(1-|z|^2)|(g\circ\varphi)'(z)-g'(z)|\log\frac{2}{1-|\varphi(z)|^2}\\
    &\leq&|\varphi^\#(z)|(1-|\varphi(z)|^2)|g'(\varphi(z)|\log\frac{2}{1-|\varphi(z)|^2}\\
    &&+(1-|z|^2)|g'(z)|\log\frac{2}{1-|z|^2}\cdot\frac{\log2-\log(1-|\varphi(z)|^2)}{\log2-\log(1-|z|^2)}.
  \end{eqnarray*}
  Just consider those $z$'s tending to $\partial\mathbb{D}$ with $|\varphi(z)|\rightarrow1$.
  \begin{eqnarray*}
    &&\lim\limits_{|z|\rightarrow1^-}\frac{\log2-\log(1-|\varphi(z)|^2)}{\log2-\log(1-|z|^2)}\\
    &=&\lim\limits_{|z|\rightarrow1^-}\frac{-\log(1-|\varphi(z)|)}{-\log(1-|z|)}\\
    &\leq&\lim\limits_{|z|\rightarrow1^-}\frac{-\log(1-\frac{|z|+|\varphi(0)|}{1+|z||\varphi(0)|})}{-\log(1-|z|)}\\
    &=&\lim\limits_{|z|\rightarrow1^-}\frac{\log(1-|z|)(1-|\varphi(0)|)-\log(1+|z||\varphi(0)|)}{\log(1-|z|)}\\
    &=&\lim\limits_{|z|\rightarrow1^-}\frac{-\frac{1}{1-|z|}-\frac{|\varphi(0)|}{1+|z||\varphi(0)|}}
    {-\frac{1}{1-|z|}}=1
  \end{eqnarray*}
  where Lemma 4.8 and L'Hospital Law are applied. Thus \eqref{18} holds for
  every $\varphi\in S(\mathbb{D})$ by \eqref{20}. The proof is completed by Proposition \ref{p4.1}.
\end{proof}

By the same method as in Proposition \ref{p4.2}, ones can obtain a necessary condition for \textbf{(Q2)} when $X=\mathcal{B}$.
Hence we omit the proof.
\begin{prop}
  Let $\zeta\in\partial\mathbb{D}$, $\varphi\in S(\mathbb{D})$ and $\varphi(z)$ tend to the unit circle when $z$ converges to $\zeta$
nontangentially. If
  $$C_\varphi\mid_{\mathcal{B}}\vp_K C_\varphi\mid_{\mathcal{B}}\quad
  (J\mid_{\mathcal{B}\rightarrow\mathcal{B}}),$$ then
  $\angle-\lim\limits_{z\rightarrow\zeta}\varphi(z)=\zeta$. 
\end{prop}

\begin{prop}
    Let $\varphi\in S(\mathbb{D})$ be parabolic type with $C^1$ boundary fixed point 
$\zeta\in\partial\mathbb{D}$. If
\begin{equation}
  \label{28}\sup\limits_{z\in\mathbb{D}}\frac{|\varphi(z)-z|}{1-\max{\{|z|,|\varphi(z)|\}}}\leq\infty,
\end{equation}
   then $C_\varphi\mid_{\mathcal{B}}\vp_K C_\varphi\mid_{\mathcal{B}}$ $(J\mid_{\mathcal{B}}).$
\end{prop}
\begin{proof}
  The proposition follows by $\mathcal{B}_{\log}\subset\mathcal{B}$. In fact,
\begin{eqnarray*}
  &&\log\frac{2}{1-|\varphi(z)|^2}\approx\log\frac{2}{1-|\varphi(z)|}\leq\log\frac{2(1+|\varphi(z)||z|)}{(1-|z|)(1-|\varphi(z)|)}\\
&\lesssim&\log\frac{2}{1-|z|}\approx\log\frac{2}{1-|z|^2}
\end{eqnarray*}
for $z$ close enough to the unit circle.
By estimating that
\begin{eqnarray*}
  &&(1-|z|^2)|(g\circ\varphi)'(z)-g'(z)|\log\frac{2}{1-|\varphi(z)|^2}\\
&\lesssim&(1-|z|^2)|(g\circ\varphi)'(z)-g'(z)|\log\frac{2}{1-|z|^2}\\
&\leq&(1-|z|^2)\log\frac{2}{1-|z|^2}|\varphi'(z)||g'(\varphi(z))-g'(z)|\\
&&+(1-|z|^2)\log\frac{2}{1-|z|^2}|g'(z)||\varphi'(z)-1|,
\end{eqnarray*}
the proof will be completed as we did in Proposition \ref{p4.3}.
\end{proof}

\textbf{Remark.} Question 1 concerns the subclass of the bounded Volterra operators, whose elements'
essential commutants contain all the composition operators. Question 2 concerns the subclass
of the composition operators, whose elements' essential commutants contain all the bounded
Volterra operators. Answers to \textbf{(Q1)} are complete, but we can only
find some sufficient or necessary conditions for \textbf{(Q2)}. It seems very difficult to
answer \textbf{(Q2)} completely, since the boundary behavior of a
function either in $H^\infty$ or $\mathcal{B}$ can be rather wild.

{\bf Acknowledgement.} 
  The work received great help from Professor Kehe Zhu and Jie Xiao. We would like to take this opportunity
  to express our gratitude.

  We also would like to thank the referee for careful reading and helpful suggestions which improved the presentation.




\begin{thebibliography}{99}

%
%
  \bibitem{AC} A. Aleman and J. A. Cima, An integral operator on $H^p$ and Hardy's
  inequality, J. Anal. Math., Vol. 85, pp. 157-176, 2001.

  \bibitem{AS} A. Aleman and A. G. Siskakis, Integration operators on Bergman spaces,
  Indiana Univ. Math. J., Vol. 46, No. 2, pp. 337-356, 1997.

\bibitem{B4} B. Berndtsson, Interpolating sequences for $H^{\infty}$ in the ball,
Math. Indag. 47 (1985), 1-10; Proc. Kon. Nederl. Akad. Wetens., vol.
88A, pp. 1-10, 1985.
%
%
\bibitem{CM} C. C. Cowen and B. D. MacCluer, Composition Operators on Spaces
of Analytic Functions, CRC Press, Boca Raton, 1995.
%
%
%
%
%
%
%
%
%
%
%
%




%
%
%
%
%
\bibitem{G1} J. B. Garnett, Bounded Analytic Functions, Academic Press, New York, 1981.

\bibitem{K} S. G. Krantz, Geometric function theory: explorations in complex analysis, Birkh\"{a}ser, Boston, 2006.

\bibitem{LS} S. Li and S. Stevi\'{c}, Products of composition and integral type operators
from $H^\infty$ to the Bloch space, Complex Variables and Elliptic
Equations, vol. 53, No. 5, pp. 463-474, 2008.
\bibitem{LS6} S. Li and S. Stevi\'c, Products of Volterra type operator and
composition operator from $H^{\infty}$ and Bloch spaces to the
Zygmund space, J. Math. Anal. Appl. vol. 345, pp. 40-52, 2008.

\bibitem{LS7'} S. Li and S. Stevi\'c, Products of integral-type operators and composition operators
between Bloch-type spaces, J. Math. Anal. Appl. vol. 349, pp.
596-610, 2009.

\bibitem{MM} K. Madigan and A. Matheson, Compact composition operators on the Bloch space, Trans.
Amer. Math. Soc., vol. 347, pp. 2679-2687, 1995.

\bibitem{OSZ} S. Ohno, K. Stroethoff and R. Zhao, Weighted composition
operators between Bloch-type spaces, Rochy Mountain J. Math., vol 33(1),
pp. 191-215, 2003.

\bibitem{S1} J. Shapiro, The essential norm of a composition operator, Ann. Math., vol 125, pp. 375-404, 1987.

\bibitem{Sha} J. H. Shapiro, Composition
operators and classical function theory, Spriger-Verlag, 1993.

  \bibitem{SZ} A. G. Siskakis and R. Zhao, A Volterra type operator on spaces
  of analytic functions. Function spaces (Edwardsville, IL, 1998). Contemp. Math.,
  Vol. 232, pp. 299-311, 1999.

\bibitem{T} C. Toews, Topological components of the set of composition
operators on $H^{\infty}(B_N)$, Integral Equations Operator Theory,
vol. 48, pp. 265-280, 2004.

  \bibitem{TZ2} C. Z. Tong and Z. H. Zhou, Compact intertwining relations for composition operators between
  the weighted Bergman spaces and the weighted Bloch spaces, J. Korean Math. Soc., Vol. 51(1), pp. 125-135, 2014.

  \bibitem{TZ} C. Z. Tong and Z. H. Zhou, Intertwining relations for Volterra operators on the Bergman space,
  Illinois J. Math., to appear.

  \bibitem{X2} J. Xiao, Composition
  operators associated with Bloch-type spaces, Complex Variables, Vol.
  46, pp.109-121, 2001.

  \bibitem{X4} J. Xiao, Riemann-Stieltjes operators between weighted Bergman spaces,
  Proc. Conference on Complex and Harmonic Analysis, Thessaloniki 2007.

  \bibitem{X3} J. Xiao, Riemann-Stieltjes operators on weighted Bloch and Bergman spaces of the unit ball,
  J. London Math. Soc. Vol. 70(2), 199-214, 2004.

  \bibitem{X1} J. Xiao, The $\mathcal{Q}_p$ carelson measure problem, Advances in Mathematics,
  Vol. 217, pp. 2075-2088, 2008.


\bibitem{ZC} Z. H. Zhou and R. Y. Chen, Weighted composition operators fom $F(p, q, s) $to Bloch type spaces,
Internat. J. Math., vol. 19, no. 8, pp. 899-926,
2008.
\bibitem{ZS} Z. H. Zhou and J. H. Shi, Compactness of composition operators on the Bloch space in classical
bounded symmetric domains, Michigan Math. J., vol. 50, pp. 381-405,
2002.
\bibitem{ZZ} H. G. Zeng and Z. H. Zhou, Essential norm estimate of a composition operator between Bloch-type
spaces in the unit ball, Rocky Mountain J. Math. 42(3) (2012), 1049-1071.

  \bibitem{Z2} K. H. Zhu, Spaces of Holomorphic Functions in the Unit
  Ball. Grad. Texts in Math, Springer, 2005.

\end{thebibliography}
\end{document}